\documentclass{article}

\usepackage{amsmath,amssymb,amsthm,graphicx,amsfonts} 
\usepackage[margin=1in]{geometry}
\usepackage{enumerate}
\usepackage{tikz-cd} 

\begin{document}

\newtheorem{thm}{Theorem}[section]
\newtheorem{lem}[thm]{Lemma}
\newtheorem{cor}[thm]{Corollary}
\newtheorem*{ques}{Question}
\newtheorem{hyp}{Hypotheses}

\newtheoremstyle{boldremark}
    {\dimexpr\topsep/2\relax} 
    {\dimexpr\topsep/2\relax} 
    {}          
    {}          
    {\bfseries} 
    {.}         
    {.5em}      
    {}          

\theoremstyle{boldremark}
\newtheorem*{rmk}{Remark}
\newtheorem*{nota}{Notation}

\nocite{*} 

\title{Moishezon Spaces and Projectivity Criteria}

\author{David Villalobos-Paz}

\maketitle

\begin{abstract}
We show that a smooth Moishezon space $Y$ is non-projective if and only if it contains a rational curve such that $-[C] \in \overline{\mathrm{NE}}(Y)$. More generally, this holds if $Y$ has $\mathbb{Q}$-factorial, log terminal singularities. We derive this as a consequence of our main technical result: that we can run the relative minimal model program when the base is a normal algebraic space $Y$ of finite type over a field of characteristic $0$. As a second application, we show that every log canonical pair $(Y, \Delta)$, where $Y$ is an algebraic space of finite type over a field of characteristic $0$ admits a dlt modification that is projective over $Y$.
\end{abstract}

\section{Introduction}

A proper, irreducible, reduced analytic space $Y$ is called a Moishezon space if it is bimeromorphic to some projective variety. Any two such projective varieties are birational, and thus $Y$ is endowed with a unique algebraic structure. In fact, Artin showed in \cite{Art} that the category of Moishezon spaces is equivalent to the category of proper algebraic spaces over $\mathbb{C}$. Starting in dimension $3$, there exist Moishezon spaces that are not projective varieties, or even schemes. Hironaka constructed two such examples, which can be found in the Appendix to \cite{Har}. We note that the threefolds in Hironaka's examples contain rational curves whose sum is numerically trivial. 

Peternell conjectured that any Moishezon manifold without rational curves is projective, and in \cite{Pet} he proved this in dimension $3$. In \cite{Sho}, Shokurov proved Peternell's conjecture in arbitrary dimension when $Y$ has mild singularities, assuming the existence and termination of flips. In view of \cite{BCHM}, the assumptions in Shokurov's proof are satisfied. The precise statement is that if $\psi: (Y, \Delta) \to U$ is a proper, but not projective, morphism of normal algebraic spaces, where $(Y, \Delta)$ is analytically $\mathbb{Q}$-factorial and dlt, then $\psi$ contracts a rational curve in $Y$. However, this is clearly not enough to characterise the non-projectivity of $\psi$. In Theorem 5.1.4 of \cite{Kol2}, Koll\'{a}r shows that a proper, algebraic threefold $Y$ with normal, $\mathbb{Q}$-factorial singularities is non-projective if and only if $Y$ contains a curve $C$ such that $-[C] \in \overline{\mathrm{NE}}(Y)$.

We extend Koll\'{a}r's characterisation to arbitrary dimension and to the relative setting, but with an additional restriction on the singularities of $Y$. Our main result is:

\begin{thm} Suppose that $\psi: (Y, \Delta) \to U$ is a proper morphism of normal algebraic spaces of finite type over a field $k$ of characteristic $0$ and that $(Y, \Delta)$ has klt singularities. If $\psi$ is non-projective, then 
\begin{enumerate}
\item either $Y$ contains a rational curve $C$ such that $\psi(C)$ is a point and $-[C] \in \overline{\mathrm{NE}}(Y/U)$, 
\item or $Y$ has a small, $\mathbb{Q}$-factorial modification $Y^{\mathrm{qf}}$ that is projective over $U$ (more precisely, the composite morphism $Y^{\mathrm{qf}} \to Y \to U$ is projective).
\end{enumerate}
\end{thm}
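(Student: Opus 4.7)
The plan is to pass to a small $\mathbb{Q}$-factorial modification using the paper's main technical result, and then split into the two cases of the theorem based on whether that modification is projective over $U$. Concretely, applying the relative MMP now available over a normal algebraic space base, I would first construct a small birational morphism $\pi\colon Y^{\mathrm{qf}} \to Y$ such that $(Y^{\mathrm{qf}}, \Delta^{\mathrm{qf}})$ is klt and $\mathbb{Q}$-factorial, where $\Delta^{\mathrm{qf}} = \pi^{-1}_*\Delta$. Since $\pi$ is small, the kernel of $\pi_*\colon N_1(Y^{\mathrm{qf}}/U) \to N_1(Y/U)$ is spanned by $\pi$-contracted curve classes, and because $\pi$ is projective over $Y$, the subcone $\overline{\mathrm{NE}}(Y^{\mathrm{qf}}/Y)$ is strictly convex. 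If the composite $\psi \circ \pi\colon Y^{\mathrm{qf}} \to U$ turns out to be projective, then $Y^{\mathrm{qf}}$ is the required small $\mathbb{Q}$-factorial modification and we are in case (2); otherwise, we must produce the rational curve of case (1).

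For the non-projective case, I would first prove a $\mathbb{Q}$-factorial klt version of the theorem applied to $Y^{\mathrm{qf}}$: namely, if $Y^{\mathrm{qf}}/U$ is not projective, produce a rational curve $C' \subset Y^{\mathrm{qf}}$ contracted over $U$ with $-[C'] \in \overline{\mathrm{NE}}(Y^{\mathrm{qf}}/U)$. To do this, apply Kleiman's criterion (in its algebraic space form) to find a nonzero class $\alpha$ with $\alpha, -\alpha \in \overline{\mathrm{NE}}(Y^{\mathrm{qf}}/U)$ -- the lineality class witnessing non-projectivity -- then, after perturbing $\Delta^{\mathrm{qf}}$ so that $\alpha$ is made $K+\Delta$-negative, invoke the cone theorem and Mori's bend-and-break (both now available over algebraic space bases via the paper's MMP) to extract a rational curve $C'$ spanning the relevant extremal ray, in the style of Koll\'{a}r's threefold argument.

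Finally, push $C'$ down to $Y$. Because $\overline{\mathrm{NE}}(Y^{\mathrm{qf}}/Y)$ is strictly convex, the class $[C']$ cannot be $\pi$-contracted (otherwise both $\pm [C']$ would lie in this strictly convex subcone, forcing $[C']=0$), and so $C := \pi(C')$ is a rational curve on $Y$ with $\psi(C)$ a point, satisfying
\[
-[C] \;=\; -\pi_*[C'] \;\in\; \pi_*\overline{\mathrm{NE}}(Y^{\mathrm{qf}}/U) \;\subseteq\; \overline{\mathrm{NE}}(Y/U),
\]
as required for case (1). The main obstacle is the middle step: translating the \emph{numerical} failure of projectivity (the lineality of the cone) into the \emph{geometric} existence of an effective rational curve, since the cone theorem, boundary perturbation, extremal contraction and bend-and-break must all be executed outside the projective setting -- which is exactly what the new MMP framework of the paper is designed to enable.
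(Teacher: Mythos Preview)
Your argument has a genuine gap at precisely the point you flag as the ``main obstacle,'' and the paper's framework does not close it in the way you suggest. Theorem~1.5/2.6 lets one run the MMP for a \emph{projective} morphism whose base is an algebraic space; it does \emph{not} supply a cone theorem or bend-and-break for a \emph{non-projective} morphism. In your case~(1), you are working with $Y^{\mathrm{qf}}\to U$, which is by hypothesis non-projective, so none of the standard cone/contraction/rationality results apply to $\overline{\mathrm{NE}}(Y^{\mathrm{qf}}/U)$, and the paper's new MMP does not change this. Relatedly, your appeal to ``Kleiman's criterion (in its algebraic space form)'' to extract a lineality class from non-projectivity is circular: the algebraic-space Kleiman criterion (Corollary~1.4/3.5) is a \emph{consequence} of the theorem you are trying to prove, and without it there is no a~priori reason non-projectivity of $Y^{\mathrm{qf}}/U$ forces $\overline{\mathrm{NE}}(Y^{\mathrm{qf}}/U)$ to contain a full line (indeed the remark after Corollary~3.5 shows that numerically trivial contracted curves can already obstruct this). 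Finally, your push-down step is not justified: knowing $-[C']\in\overline{\mathrm{NE}}(Y^{\mathrm{qf}}/U)$ does not place $-[C']$ in the smaller cone $\overline{\mathrm{NE}}(Y^{\mathrm{qf}}/Y)$, so strict convexity of the latter does not rule out $C'$ being $\pi$-contracted.

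The paper avoids all of this by never working directly on the non-projective space. It takes a log resolution $X$ projective over $U$, runs the $(K_X+\Theta)$-MMP with scaling over $Y$ (each step contracting a single extremal ray, by a suitable genericity choice of $H$), and observes that the end product $X^{\min}$ is a small $\mathbb{Q}$-factorial modification of $Y$. If $X^{\min}$ is projective over $U$ we are in case~(2); otherwise there is a \emph{first} step $X^r\dashrightarrow X^{r'}$ at which projectivity over $U$ is lost. The crucial point is that $X^r$ is still projective over $U$, so the full cone/contraction machinery is available for $\overline{\mathrm{NE}}(X^r/U)$. One then compares the extremal ray $R\subset\overline{\mathrm{NE}}(X^r/Y)$ being contracted with the minimal extremal face $F\subset\overline{\mathrm{NE}}(X^r/U)$ containing it, and in either case ($\dim F=1$ or $\dim F>1$) produces a rational curve on $X^r$ whose image in $Y$ has the required property. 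The idea you are missing is to locate the analysis at this transitional MMP step, where projectivity over $U$ is still in hand.
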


\begin{cor} Suppose that $\psi: Y \to U$ is a proper morphism of normal algebraic spaces of finite type over a field $k$ of characteristic $0$, and that $\Delta$ is some divisor on $Y$ such that the pair $(Y, \Delta)$ has klt singularities. Assume additionally that $Y$ is $\mathbb{Q}$-factorial. Then $\psi$ is non-projective if and only if $Y$ contains a rational curve $C$ such that $\psi(C)$ is a point and $-[C] \in \overline{\mathrm{NE}}(Y/U)$.
\end{cor}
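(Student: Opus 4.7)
The plan is to derive this directly from Theorem 1.1 by ruling out the second alternative whenever $Y$ itself is $\mathbb{Q}$-factorial, and to establish the converse by a short application of the relative Kleiman criterion.

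For the ``only if'' direction I would apply Theorem 1.1 to the pair $(Y,\Delta)$. The conclusion is either the rational curve we want, in which case we are done, or the existence of a small $\mathbb{Q}$-factorial modification $\pi : Y^{\mathrm{qf}} \to Y$ such that the composite $Y^{\mathrm{qf}} \to Y \to U$ is projective. I would then invoke the classical fact that a small proper birational morphism from a normal variety to a $\mathbb{Q}$-factorial variety is an isomorphism. Since $Y^{\mathrm{qf}} \to U$ is projective and $\psi$ is separated, the morphism $\pi$ is itself projective, so a $\pi$-ample Cartier divisor $H$ on $Y^{\mathrm{qf}}$ exists. Because $\pi$ is small it has no exceptional divisor, so $\pi_{\ast} H$ is a Weil divisor on $Y$; since $Y$ is $\mathbb{Q}$-factorial, $\pi_{\ast} H$ is $\mathbb{Q}$-Cartier, and the standard negativity-of-pullback equality $\pi^{\ast}\pi_{\ast}H = H$ then holds. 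Hence $H \cdot \Gamma = \pi_{\ast}H \cdot \pi_{\ast}\Gamma = 0$ for any $\pi$-contracted curve $\Gamma \subset Y^{\mathrm{qf}}$, contradicting $\pi$-ampleness of $H$ unless $\pi$ contracts no curves at all. Such a $\pi$ is finite and birational, hence an isomorphism because $Y$ is normal. But an isomorphism $\pi$ would make $\psi$ projective, contrary to assumption, so the first alternative of Theorem 1.1 must hold.

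For the ``if'' direction I would argue by contradiction. Assume $\psi$ is projective with a $\psi$-ample Cartier divisor $H$. The hypothesis that $\psi(C)$ is a point places $C$ inside a single fiber of $\psi$; since $H$ restricts to an ample divisor on that fiber, $H \cdot C > 0$. In particular $[C] \neq 0$ in $N_{1}(Y/U)$, and hence $-[C]$ is a nonzero class in $\overline{\mathrm{NE}}(Y/U)$. The relative Kleiman ampleness criterion applied to $H$ then yields $H \cdot (-[C]) > 0$, contradicting $H \cdot C > 0$.

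Almost all of the substantive content is packaged into Theorem 1.1; the only ingredient genuinely particular to this corollary is the ``small modification of a $\mathbb{Q}$-factorial variety is an isomorphism'' lemma, but that is a standard piece of the $\mathbb{Q}$-factorial MMP toolkit and I do not expect it to present a real obstacle. The one place to be careful is that this lemma is usually stated for schemes, so one should check that its proof passes through in the algebraic-space setting; this should be routine once the relative cone theorem and Kleiman's criterion are available for proper morphisms of algebraic spaces, which is part of the broader framework developed in the paper.
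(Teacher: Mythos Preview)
Your proposal is correct and follows essentially the same approach as the paper: the ``if'' direction is the same positivity argument, and the ``only if'' direction in both cases hinges on the observation that a $\mathbb{Q}$-factorial $Y$ admits no nontrivial small projective modification, which forces alternative (2) of Theorem~1.1 to collapse. The only stylistic difference is that the paper re-enters the proof of Theorem~3.2 to note $X^{\mathrm{min}} \cong Y$ directly, whereas you invoke Theorem~1.1 as a black box and then rule out alternative (2); the content is identical.
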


In particular, when $k=\mathbb{C}$, $U = \mathrm{Spec}(k)$, we get:

\begin{cor} Suppose that $Y$ is a Moishezon space with $\mathbb{Q}$-factorial, log terminal singularities. Then $Y$ is non-projective if and only if it contains a rational curve $C$ such that $-[C] \in \overline{\mathrm{NE}}(Y)$.
\end{cor}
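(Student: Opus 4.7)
The plan is to derive this as an essentially immediate specialisation of Corollary 1.2 to the case $U = \mathrm{Spec}(\mathbb{C})$ and $\Delta = 0$. First I would invoke Artin's equivalence of categories (cited in the introduction) to transport $Y$ from the analytic setting to a proper algebraic space of finite type over $\mathbb{C}$; since projectivity on one side corresponds to projectivity on the other, non-projectivity of the Moishezon space $Y$ coincides with non-projectivity of the structure morphism $\psi : Y \to \mathrm{Spec}(\mathbb{C})$.

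Next I would check the hypotheses of Corollary 1.2 with the boundary taken to be $\Delta = 0$. The $\mathbb{Q}$-factoriality of $Y$ in particular forces $K_Y$ to be $\mathbb{Q}$-Cartier, so the phrase ``$Y$ has log terminal singularities'' (read in the paper's convention as Kawamata log terminal) is exactly the assertion that the pair $(Y, 0)$ is klt. All of the hypotheses of Corollary 1.2 are therefore satisfied, and applying it yields the equivalence: $\psi$ is non-projective if and only if $Y$ contains a rational curve $C$ with $\psi(C)$ a point and $-[C] \in \overline{\mathrm{NE}}(Y/U)$.

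Finally, specialising to $U = \mathrm{Spec}(\mathbb{C})$ simplifies this statement: the condition that $\psi(C)$ be a point becomes vacuous, and the relative cone $\overline{\mathrm{NE}}(Y/U)$ coincides with the absolute cone $\overline{\mathrm{NE}}(Y)$. Transporting back along Artin's equivalence gives exactly the claim of the corollary. There is really no obstacle beyond bookkeeping here: the only items to be careful about are the reading of ``log terminal'' as klt and the fact that Artin's theorem, as invoked in the introduction, matches projective Moishezon spaces with projective schemes; neither requires input beyond what has already been assembled.
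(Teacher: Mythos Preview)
Your proposal is correct and matches the paper's own treatment: the paper presents this corollary as the specialisation of Corollary~1.2 to $k=\mathbb{C}$, $U=\mathrm{Spec}(k)$, with no further argument given. Your added bookkeeping (invoking Artin's equivalence explicitly and noting that $\mathbb{Q}$-factoriality plus log terminal gives $(Y,0)$ klt) just spells out what the paper leaves implicit.
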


As a consequence of Corollary 1.2, we can prove a version of Kleiman's criterion for Moishezon spaces:

\begin{cor} Suppose that $Y$ is a Moishezon space with $\mathbb{Q}$-factorial, log terminal singularities and that $L$ is a Cartier divisor on $Y$. Then $L$ is ample if and only if $L$ has positive degree on every irreducible curve on $Y$ and $L$ induces a strictly positive function on $\overline{\mathrm{NE}}(Y)$.
\end{cor}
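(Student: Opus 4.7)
The forward direction is standard: if $L$ is ample then $Y$ is automatically projective, and the classical Kleiman criterion for projective varieties supplies both numerical conditions. My plan for the reverse direction is to use Corollary 1.3 to promote the numerical hypotheses on $L$ to the projectivity of $Y$, and then invoke the classical Kleiman criterion.

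Assume $L$ has positive degree on every irreducible curve and is strictly positive on $\overline{\mathrm{NE}}(Y) \setminus \{0\}$. I would argue by contradiction: suppose $Y$ is not projective. By Corollary 1.3 there exists a rational curve $C \subset Y$ with $-[C] \in \overline{\mathrm{NE}}(Y)$. By the first hypothesis $L \cdot C > 0$, so in particular $[C] \neq 0$ in $N_1(Y)$, whence $-[C]$ is a nonzero class in $\overline{\mathrm{NE}}(Y)$. But then $L \cdot (-[C]) = -L \cdot C < 0$, contradicting the second hypothesis. Hence $Y$ is projective.

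Once $Y$ is projective, the classical Kleiman criterion applies to the Cartier divisor $L$ on $Y$, and the second hypothesis (strict positivity on $\overline{\mathrm{NE}}(Y) \setminus \{0\}$) is precisely what is needed to conclude that $L$ is ample.

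The only substantive input is Corollary 1.3; after that, the argument is formal. The first hypothesis, positivity of $L \cdot C$ for every irreducible curve, plays the minor but essential role of guaranteeing that the rational curve produced by Corollary 1.3 has nonzero numerical class, which is what allows the second hypothesis to yield a contradiction. I do not foresee a real obstacle here, since both the finite-dimensionality of $N_1(Y)$ for Moishezon $Y$ and the classical Kleiman criterion in the projective case are well established.
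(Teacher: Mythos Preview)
Your proof is correct and follows essentially the same route as the paper: use the projectivity characterisation (Corollary~1.3, or its relative form Corollary~3.3) to rule out non-projectivity via the curve $C$ with $-[C]\in\overline{\mathrm{NE}}(Y)$, then apply the classical Kleiman criterion. The only cosmetic difference is that the paper packages the projectivity step as a separate result (Corollary~3.4) and obtains it from the first hypothesis alone---positivity of $L$ on curves forces $L\cdot(-C)\geq 0$ since $-[C]\in\overline{\mathrm{NE}}(Y)$, already contradicting $L\cdot C>0$---so the second hypothesis is used only for the final ampleness conclusion, not for establishing projectivity.
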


In the works of Shokurov and Birkar-Cascini-Hacon-McKernan, they find a log resolution $(X, \Theta)$ of $(Y, \Delta)$ that is projective over $U$ and run a minimal model program on $(X, \Theta)$ over a single \'{e}tale-open affine patch of $U$. The difference in our approach is that we run an MMP for $X$ over $Y$, and this gives us a stronger result. Indeed, our main theorem will follow once we are able to run relative MMPs for $X \to Y$, where the base $Y$ is a normal algebraic space of finite type over a field of characteristic $0$. 

Roughly speaking, we do this as follows: $Y$ is an algebraic space, so it admits \'{e}tale morphisms $Y_j \to Y$ whose images cover $Y$, where the $Y_j$ are affine. We can then run a relative MMP for $X \times_Y Y_j$ over $Y_j$ for each $j$, so it is enough to verify that the resulting minimal models patch together and descend to give a minimal model for $X$ over $Y$. The right way to run the relative MMP for this application is to do as Koll\'{a}r describes in $\cite{Kol}$. 

The technical details are contained in Section 2. The precise MMP statement that we need for our applications is the following:

\begin{thm} Suppose $g: (X, \Theta) \to (Y, \Delta)$ is a projective morphism of algebraic spaces of finite type over a field of characteristic $0$, where $(X, \Theta)$ is $\mathbb{Q}$-factorial and dlt. Suppose that $g$ has exceptional divisor $E= E_1+ \cdots + E_n$ and that $K_X+\Theta \sim_{g, \mathbb{R}} E_\Theta = \Sigma e_jE_j$ for some numbers $e_j \geq 0$. Finally, suppose that $H$ is a divisor on $X$, such that $K_X + \Theta +cH$ is $g$-ample for some number $c$.
Then we may run the $g$-relative $(K_X+\Theta)$-MMP with scaling of $H$. 
\end{thm}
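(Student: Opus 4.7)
The strategy, foreshadowed in the introduction, is to run the MMP étale-locally on $Y$ and then descend. First I would pick an étale cover $\{u_\alpha: Y_\alpha \to Y\}$ with each $Y_\alpha$ affine of finite type over $k$, and base-change to obtain projective morphisms $g_\alpha: (X_\alpha, \Theta_\alpha) \to Y_\alpha$ of schemes. Étale pullback preserves $\mathbb{Q}$-factoriality, dlt singularities, and relative ampleness, and carries $E = \sum E_j$ to the exceptional divisor of $g_\alpha$ with the relation $K_{X_\alpha}+\Theta_\alpha \sim_{g_\alpha,\mathbb{R}} \sum e_j E_{j,\alpha}$ preserved and still effective. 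Each $g_\alpha$ therefore satisfies the hypotheses of the theorem over a quasi-projective base, so by \cite{BCHM} together with Kollár's treatment of this exceptional setup in \cite{Kol} we may run the $g_\alpha$-relative $(K_{X_\alpha}+\Theta_\alpha)$-MMP with scaling of $H_\alpha$; termination follows because each divisorial step strictly decreases the finite set of exceptional divisors of $g_\alpha$, and termination with scaling for the flipping steps is known in the $\mathbb{Q}$-factorial dlt setting over a scheme base.

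The next step is to prove that the outputs of the local MMPs are canonically compatible. At each step $i$ the scaling threshold
\[
\lambda_i = \inf\{t \geq 0 : K_{X^i}+\Theta^i+tH^i \text{ is } g^i\text{-nef}\}
\]
is independent of the patch, since $g^i$-nefness is tested on curves in fibers and every fiber of $g^i$ is the étale pullback of a scheme-fiber of some $g^i_\alpha$. The $(K+\Theta+\lambda_i H)$-trivial extremal face $F_i \subset \overline{\mathrm{NE}}(X^i/Y)$ is therefore global, and its pullback to each $\overline{\mathrm{NE}}(X^i_\alpha/Y_\alpha)$ records the face contracted locally. The associated contraction, or the target of the flip, is constructed as the relative $\mathrm{Proj}$ of the finitely generated graded sheaf $\bigoplus_m g^i_* \mathcal{O}_{X^i}(m(K_{X^i}+\Theta^i+\lambda_i H^i))$ (finite generation from \cite{BCHM} after a $\mathbb{Q}$-linear approximation), and the formation of this sheaf commutes with the flat base change $Y_\alpha \to Y$. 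Hence the local constructions agree on double overlaps $Y_{\alpha\beta}=Y_\alpha\times_Y Y_\beta$, and the cocycle condition on triple overlaps is automatic.

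Étale descent for algebraic spaces then glues the compatible local outputs to a single algebraic space $X^{i+1}$ over $Y$ together with a birational map $X^i \dashrightarrow X^{i+1}$ over $Y$, which is the desired MMP step. Iterating yields the global $g$-relative MMP, and termination in each étale patch forces global termination.

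The step I expect to be the main obstacle is descent itself, i.e.\ showing that the étale-local contractions and flips really glue to a morphism of algebraic spaces over $Y$. Divisorial contractions are essentially formal because of flat base change for $\mathrm{Proj}$. For flips the delicate issues are (i) preservation of $\mathbb{Q}$-factoriality and dltness of $(X^{i+1}, \Theta^{i+1})$ as an algebraic-space pair after descent, and (ii) characterising the flip by a universal property that is stable under étale base change on the target. The hypothesis $K_X+\Theta \sim_{g,\mathbb{R}} \sum e_j E_j$ with the $E_j$ exceptional and $e_j \geq 0$ is essential here: it is what Kollár exploits in \cite{Kol} to guarantee both termination and canonicity of the MMP, and it is precisely what makes the étale-local comparisons of thresholds and extremal rays above unambiguous.
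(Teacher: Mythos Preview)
Your overall strategy (run the MMP over an \'etale atlas and descend) matches the paper's, but the synchronization of the local MMPs is where your argument has a genuine gap. The claim that the scaling threshold $\lambda_i$ is ``independent of the patch'' is not correct: the extremal curves forcing the first contraction over $Y_\alpha$ may lie entirely over points of $Y$ missed by $Y_\beta$, so the threshold over $Y_\beta$ can be strictly smaller. Thus ``step $i$'' of the local MMP over $Y_\alpha$ need not correspond to ``step $i$'' over $Y_\beta$, and a step-by-step comparison breaks down. The paper confronts exactly this and resolves it by indexing outputs \emph{continuously by the scaling constant $r$} rather than discretely by step number (see the Notation preceding Lemma~2.1): the ``$r^{\mathrm{th}}$ output'' is well-defined even when no step occurs at $r$ on a given patch, since it is then just the identity. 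The device that replaces your Proj argument is Lemma~2.1, an abstract characterization of the $r^{\mathrm{th}}$ output as the unique birational contraction $f^r$ for which $f^r_\star(D+(r-\epsilon)H)$ is relatively ample and only divisors in $\mathbb{E}(D+rH)$ are contracted. This characterization is manifestly stable under quasi-finite universally-open base change (Theorem~2.2) and avoids both the $\mathbb{Q}$-approximation and the awkwardness of forming a relative $\mathrm{Proj}$ over an algebraic-space base before you know the target exists.

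The descent mechanism also differs from yours. Rather than verifying cocycle conditions on double and triple \'etale overlaps, the paper sets $V=\coprod_j Y_j$, extends $V\to Y$ to a finite map, passes to a Galois closure $\tilde W\to Y$ with group $G$ (Theorem~2.5), builds the $r^{\mathrm{th}}$ output over $\tilde W$ by gluing along the Zariski-open cover by $G$-translates of the preimage of $V$ (Corollary~2.3), and then takes the $G$-quotient (Lemma~2.4). This converts the \'etale descent problem into a Zariski gluing followed by a finite-group quotient, which is what makes the argument go through cleanly. A minor point: the local input the paper invokes over each affine $Y_j$ is \cite{Fuj}, which is tailored to the hypothesis $K_X+\Theta\sim_{g,\mathbb{R}}(\text{effective exceptional})$, rather than \cite{BCHM} directly.
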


In our applications, we will start with an algebraic space $Y$ of finite type over a field of characteristic $0$. We will have some boundary divisor $\Delta$ on $Y$, such that the pair $(Y, \Delta)$ has mild singularities, and $g: (X, \Theta) \to (Y, \Delta)$ will be a suitably chosen log resolution. We will need a divisor $H$ on $X$, such that $K_X + \Theta + cH$ is ample over $Y$ for some number $c$. If $X$ is taken to be globally projective, we could, for example, choose $H$ to be some ample divisor on $X$. In fact, to prove Theorem 1.1, we take $H$ to be ample and general enough that each step in the resulting MMP arises from the contraction of an extremal ray in the relative cone of curves. See Section 3 for the proof of our projectivity criterion and its corollaries.

In addition to our characterisation of projectivity, Theorem 1.5 also allows us to prove:

\begin{thm} Suppose $(Y, \Delta)$ is a log canonical pair, where $Y$ is an algebraic space of finite type over a field of characteristic $0$. Then $(Y, \Delta)$ admits a dlt modification.
\end{thm}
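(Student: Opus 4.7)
The plan is to realize the dlt modification as the output of a relative $(K+\Theta)$-MMP over $Y$, using Theorem 1.5 to run the MMP in spite of the algebraic-space base. The construction itself is the standard one from the scheme-theoretic setting (as in Kollár--Kovács); the content here is the verification of its hypotheses in the algebraic-space category.

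First I would take a projective log resolution $g: X \to Y$ of $(Y,\Delta)$, which exists for algebraic spaces of finite type over a characteristic-zero field (either by Temkin's desingularization or by \'etale-descending Hironaka's canonical resolution). Let $E_1, \ldots, E_n$ be the $g$-exceptional prime divisors and set $\Theta := g^{-1}_*\Delta + \sum_i E_i$. Because $(Y,\Delta)$ is lc, all coefficients of $\Theta$ lie in $[0,1]$, $X$ is smooth, and $\Theta$ has snc support, so $(X,\Theta)$ is log smooth and in particular $\mathbb{Q}$-factorial dlt. Writing $a_i := a(E_i;Y,\Delta) + 1 \geq 0$, the definition of discrepancy gives
\[
K_X + \Theta = g^*(K_Y + \Delta) + E_\Theta, \qquad E_\Theta := \sum_i a_i E_i,
\]
so $K_X + \Theta \sim_{g,\mathbb{R}} E_\Theta$ with $E_\Theta$ effective and $g$-exceptional.

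Next I would pick a $g$-ample divisor $H$ on $X$ (which exists since $g$ is projective), so that $K_X + \Theta + cH$ is $g$-ample for $c$ sufficiently large. All hypotheses of Theorem 1.5 are then met, and I may run the $g$-relative $(K_X + \Theta)$-MMP with scaling of $H$. Let $\pi: X \dashrightarrow Y'$ be the resulting birational map and $g': Y' \to Y$ the induced morphism; set $\Theta' := \pi_*\Theta$. Since $\mathbb{Q}$-factorial dlt is preserved by each step of the MMP, $(Y',\Theta')$ is $\mathbb{Q}$-factorial dlt, and $K_{Y'} + \Theta'$ is $g'$-nef at termination.

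To conclude, push the displayed equation forward to get
\[
K_{Y'} + \Theta' - (g')^*(K_Y + \Delta) = \pi_* E_\Theta,
\]
an effective, $g'$-exceptional, and $g'$-nef divisor on $Y'$. The negativity lemma, which remains valid for proper birational morphisms of normal algebraic spaces (being a local statement on the base), forces $\pi_* E_\Theta \leq 0$, and combined with effectiveness we get $\pi_* E_\Theta = 0$. Therefore $\pi$ contracts exactly those $E_i$ with $a_i > 0$; the surviving exceptional divisors all have discrepancy exactly $-1$, appear in $\Theta'$ with coefficient $1$, and $K_{Y'} + \Theta' = (g')^*(K_Y + \Delta)$. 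This is precisely a dlt modification. The main obstacle the argument relies on is the absence of a single global affine base over which to apply BCHM directly, and it is exactly this that Theorem 1.5 removes; the remainder is bookkeeping with the standard construction.
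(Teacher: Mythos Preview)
Your overall strategy---take a projective log resolution, set $\Theta = g^{-1}_*\Delta + \sum E_i$, run the relative $(K_X+\Theta)$-MMP over $Y$ via Theorem~1.5, and use the negativity lemma to identify the output as a dlt modification---is correct and is the same skeleton the paper uses. One point requires care: you assert that ``$\mathbb{Q}$-factorial dlt is preserved by each step of the MMP.'' The MMP furnished by Theorem~1.5 is built by \'etale descent from the local MMPs over an affine cover, and a single global step may amalgamate several local ray-contractions into the contraction of a higher-dimensional extremal face; such steps need not preserve $\mathbb{Q}$-factoriality (the paper says this explicitly in the Remark following Theorem~4.1 and then does extra work in Corollary~4.2 to recover it). Your argument does not actually need $\mathbb{Q}$-factoriality of the output---$K_{Y'}+\Theta'$ and $(g')^*(K_Y+\Delta)$ are both $\mathbb{R}$-Cartier, so $\pi_*E_\Theta$ is $\mathbb{R}$-Cartier and the negativity lemma applies---so you should simply drop that adjective. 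If you do want $\mathbb{Q}$-factoriality, you must choose $H$ more carefully (as in Lemma~3.1, with components spanning $\mathrm{NS}(X)_{\mathbb{Q}}$ and generic coefficients) so that each global step is a genuine ray contraction.

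The paper takes a slightly different route in its choice of scaling divisor: rather than an arbitrary $g$-ample $H$, it arranges (via \cite{KW}) that $\mathrm{Ex}(g)$ supports a $g$-ample divisor and takes $H$ with $\mathrm{Supp}(H)=\mathrm{Ex}(g)$. This costs an extra argument---one must check that the \'etale pullback $H_V$, whose irreducible components may repeat coefficients, still behaves well enough for Koll\'ar's MMP in \cite{Kol} to apply---but it buys a stronger conclusion: the resulting dlt modification carries a relatively ample \emph{exceptional} divisor over $Y$ (this is the full statement of Theorem~4.1). Your choice of a general ample $H$ sidesteps that technical verification and suffices for Theorem~1.6 as stated, but does not yield the exceptional ample divisor.
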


The proof of Theorem 1.6 is in Section 4. For this, we will want $H$ to have support equal to $\mathrm{Ex}(g)$, which puts us in the setting of \cite{Kol}; in this case, we choose our log resolution so that it carries a $g$-ample, $g$-exceptional divisor. This can be arranged by Theorem 1 in \cite{KW}.

In \cite{Many}, it is shown that, with certain assumptions, the minimal model program runs for threefolds in mixed characteristic, so our results extend to algebraic spaces of dimension $3$ in mixed characteristic. There is also work in progress by Lyu and Murayama, \cite{LM}, with the aim of showing that the MMP can be run over quasi-excellent schemes of characteristic $0$. In view of this, our results would extend to the case where $Y$ is an algebraic space of finite type over a quasi-excellent scheme of characteristic $0$.

\subsection*{Acknowledgements}
The author would like to thank his advisor, J\'{a}nos Koll\'{a}r, for his constant support, encouragement, and guidance, as well as Lena Ji for helpful conversations. Partial financial support was provided by the NSF under J. Koll\'{a}r's grant number DMS-1901855.

\section{Technical Results on the Relative MMP}

Our aim is to run the relative MMP over agebraic spaces, but we first develop our theory in a more abstract setting, where in place of $K_X + \Theta$ we consider an arbitrary $\mathbb{R}$-Cartier divisor $D$. Throughout this paper we will work under the following hypotheses: \\

$(\dagger)$ $V$ will be assumed to be a quasi-excellent, normal scheme, $Y$ a quasi-excellent, normal, irreducible algebraic space, and $X$ a quasi-excellent, normal algebraic space.  $\pi: V \to Y$ will be a universally open and quasi-finite morphism, and $g: X \to Y$ a projective morphism. $D$ and $H$ will denote $\mathbb{R}$-Cartier divisors on $X$, such that for some real number $c$, $D+cH$ is $g$-ample. \\

By decreasing the scaling constant $c$ in $(\dagger)$, we eventually reach a number $r>0$, such that $D+rH$ is $g$-nef but not $g$-ample, and $D+(r+\epsilon)H$ is $g$-ample for all $\epsilon > 0$; note that if we reach $r=0$, then $D$ is already $g$-nef, so we do not do anything. We define $\phi^r : X \to Z$ to be the morphism that contracts all curves $C$ in $X$, such that $C$ is contracted by $g$ and $(D+rH) \cdot C = 0$. These curves are those in some extremal face of the relative cone of curves $\overline{\mathrm{NE}}(X/Y)$. We will assume that the contraction $\phi^r$ and the corresponding step $f^r$ of the relative $D$-MMP with scaling of $H$ exist:

\[ \begin{tikzcd}
X \arrow[rr, dashed, "f^r"] \arrow[ddr, "g", swap] \arrow[dr, "\phi^r"] & & X^r \arrow[ld, "\psi^r", swap] \arrow[ldd, "g^r"] \\
& Z \arrow[d] & \\
& Y &
\end{tikzcd}
\]

If the birational transforms $D^r = (f^r)_\star D$ and $H^r = (f^r)_\star H$ are $\mathbb{R}$-Cartier, then the divisor $D^r + (r-\epsilon)H^r$ on $X^r$ is $g^r$-ample for sufficiently small $\epsilon>0$. Then we obtain the next step of this relative MMP by decreasing $r-\epsilon$ and proceeding as above.

We are interested in how the steps of this MMP behave with respect to base change, so we consider the commutative diagram 

\[ \begin{tikzcd}
X_V = (X \times_Y V)^\nu \arrow[r, "g_V"] \arrow[d, "\pi_V"] & V \arrow[d, "\pi"] \\
X \arrow[r, "g"] & Y
\end{tikzcd}
\]

where the superscript $\nu$ will always denote the normalisation. Note that, in view of our quasi-excellence hypothesis, the normalisation morphism is finite, and hence projective, and so $\pi_V$ is quasi-finite and $g_V$ is projective. Additionally, since $\pi$ is universally open, every connected component of $X_V$ dominates a component of $X$. We can then set $D_V = \pi^{\star}_V(D)$ and $H_V = \pi^{\star}_V(H)$. Observe that if $D+cH$ is $g$-ample, then $D_V + cH_V$ is $g_V$-ample, and so by decreasing $c$ we can similarly define the steps of the relative $D_V$-MMP with scaling of $H_V$, assuming they exist.

We will index the outputs of these MMPs in a non-standard way. The steps of the MMP are typically indexed discretely, but this can lead to the steps of the MMP for $X \to Y$ and those of the MMP for $X_V \to V$ to be indexed differently: It could happen that as we decrease $c$ in $D+cH$, we hit the first number $r$ for which $D+rH$ stops being $g$-ample, forcing us to do an MMP step over $Y$, and yet $D_V+rH_V$ is still $g_V$-ample, so that for the moment no MMP step is performed over $V$. In this situation, the first step of the MMP over $V$ would come later than the first step of the MMP over $Y$.

For this reason, we will index the outputs of the MMP continuously, rather than discretely, using the scaling constant as the index. More formally:

\begin{nota} If it exists, the $r^{th}$ output of the $g$-relative $D$-MMP with scaling of $H$, denoted by $f^r: X \dashrightarrow X^r$, will mean the composite $X \dashrightarrow X^{r_1} \dashrightarrow \cdots \dashrightarrow X^{r_n}$ for numbers $r_1 > \cdots > r_n \geq r$, where each $r_i$ is such that $D^{r_{i-1}}+r_i H^{r_{i-1}}$ is nef, but not ample, over $Y$, $D^{r_{i-1}}+(r_i + \epsilon)H^{r_{i-1}}$ is ample over $Y$, and $D^{r_n} + (r-\epsilon)H^{r_n}$ is ample over $Y$ (where $\epsilon > 0$ is sufficiently small). Each birational map in the composition then corresponds to a relative MMP step as described above.
\end{nota}

We note that when the relative MMP with scaling is run as in \cite{BCHM}, we are allowed to have $r_i = r_{i+1}$, since each MMP step comes from the contraction of an extremal ray in the relative cone of curves. However, in \cite{Kol}, each MMP step is allowed to contract a higher-dimensional face of this cone, and we must have $r_i > r_{i+1}$.

Finally, we emphasise that when we talk about the $r^{th}$ output, we do not mean the output after performing $r$ MMP steps, but rather the output after the scaling constant has been decreased to $r$. For example, if $r$ is such that $D+rH$ on $X$ is still $g$-ample, then the $r^{th}$ output $f^r: X \dashrightarrow X^r$ is simply the identity map on $X$.

Suppose we are in the setting of $(\dagger)$. We first give an abstract characterisation of the $r^{th}$ output of our relative MMP.

\begin{lem} Suppose the hypotheses in $(\dagger)$ are satisfied. The $r^{th}$ output of the relative D-MMP over $Y$ with scaling of $H$, if it exists, is characterised by the following properties:
\begin{enumerate}
\item $f^r : X \dashrightarrow X^r$ is a birational contraction to a normal variety over $Y$,
\item $f^r_{\star} (D + (r - \epsilon)H)$ is ample over $Y$ for sufficiently small $\epsilon >0$.
\item $f^r$ only contracts divisors contained in the relative exceptional locus $\mathbb{E} (D + rH)$ of $D+rH$; that is, it only contracts divisors $E$ for which the restriction $(D+rH)|_E$ is not big over $Y$.
\end{enumerate}
\end{lem}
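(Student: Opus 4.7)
The plan is to prove the characterization in two directions: verify that the MMP output $f^r$ satisfies properties (1)--(3), and then show that these properties determine $f^r$ uniquely.

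For the forward direction, I would induct on the number of MMP steps composing $f^r$. Property (1) is preserved under composition because each step is either a divisorial contraction or a flip, both of which are birational contractions. Property (2) is immediate from the indexing convention in the notation box: the $r$-th output is defined precisely so that $D^{r_n} + (r-\epsilon)H^{r_n}$ is ample over $Y$ for small $\epsilon > 0$. Property (3) is the most delicate; I would track contracted divisors through the MMP. At step $i$, the face being contracted consists of classes on which $D^{r_{i-1}} + r_i H^{r_{i-1}}$ vanishes, and a divisor contracted at this step is covered by contracted curves, so the restriction of $D^{r_{i-1}} + r_i H^{r_{i-1}}$ to it fails to be big over $Y$. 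Since $r_i \geq r$ and the strict transforms behave compatibly, this translates to the corresponding divisor on $X$ lying in $\mathbb{E}(D + rH)$.

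For uniqueness, let $f : X \dashrightarrow X'$ be a second birational contraction satisfying (1)--(3), and set $A = f^r_\star(D + (r-\epsilon)H)$ on $X^r$ and $A' = f_\star(D+(r-\epsilon)H)$ on $X'$, both ample over $Y$ by (2). I would show that the induced birational map $\alpha : X^r \dashrightarrow X'$ is an isomorphism over $Y$. First, I claim $f^r$ and $f$ contract the same divisors on $X$. If $E \subset X$ were contracted by one but not the other, then the ampleness of the relevant pushforward would force $(D + (r-\epsilon)H)|_E$ to be big, but (3) places $E$ in $\mathbb{E}(D + rH)$, contradicting bigness once $\epsilon$ is chosen small enough that $\mathbb{E}(D + rH) = \mathbb{E}(D + (r-\epsilon)H)$ holds on the finite set of divisors concerned. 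Hence $\alpha$ is a small birational map. Taking a common resolution with morphisms $p : W \to X^r$ and $q : W \to X'$, the pullbacks $p^*A$ and $q^*A'$ agree outside a codimension-$\geq 2$ subset of $W$; by reflexivity of $\mathbb{R}$-Cartier divisor classes on the normal space $W$, they are numerically equivalent over $Y$. Then $X^r$ and $X'$ are both realized as $\operatorname{Proj}_Y$ of the section ring of this common class, so they are canonically isomorphic over $Y$ and the isomorphism is $\alpha$.

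The main obstacle I expect is the ``same divisors contracted'' step: comparing bigness of restrictions at scaling values $r$ and $r - \epsilon$ requires the existence of a uniform $\epsilon > 0$ avoiding the (finitely many relevant) bigness thresholds of exceptional divisors, and this has to be done carefully before invoking (3). A secondary technical point is the reduction of standard MMP tools --- the negativity lemma, relative $\operatorname{Proj}$, and ampleness criteria --- to the setting of normal algebraic spaces in hypothesis $(\dagger)$; this should follow by étale base change to the scheme case, but must be checked.
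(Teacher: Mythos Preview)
Your proposal is correct and follows essentially the same approach as the paper: verify properties (1)--(3) for the MMP output, then prove uniqueness by showing any two candidates contract the same divisors (using the interplay between property~(3) and the relative ampleness in property~(2)) and hence differ by a small map identifying the two ample pushforwards. The only cosmetic difference is that the paper dispatches the final isomorphism step by citing the relative Matsusaka--Mumford theorem, whereas you unwind it via a common resolution and a $\operatorname{Proj}$ argument; your extra care about the $r$ versus $r-\epsilon$ bigness comparison is a point the paper leaves implicit.
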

\begin{proof} That the $r^{th}$ output satisfies these three properties is clear. Suppose that $h: X \dashrightarrow X'$ is another birational map satisfying the same three properties. Consider the composite $f^r \circ h^{-1} : X' \dashrightarrow X^r$. We claim that this is an isomorphism in codimension $1$. For this, it is enough to show that $f^r$ and $h$ contract the same divisors. Suppose $E$ is some divisor contracted by $f^r$. Then $(D+rH)|_E$ is not big over $Y$. Since $h_{\star} (D + (r - \epsilon)H)$ is ample, and in particular big, over $Y$, then $h$ must also contract $E$: Otherwise, $h_{\star}$E would be a divisor on $X^r$ on which $h_{\star} (D + (r - \epsilon)H)$ is not big. Reversing the roles of $f$ and $h$, we see that $X^r$ and $X'$ have the same divisors. \\

We also know that $h_{\star} (D + (r - \epsilon)H)$ is ample over $Y$ and its birational transform by $f^r \circ h^{-1}$, namely $f^r_{\star} (D + (r - \epsilon)H)$, is also ample over $Y$. We thus conclude that $f^r \circ h^{-1} : X' \dashrightarrow X^r$ is an isomorphism. This follows from the relative version of a result of Matsusaka and Mumford; see \cite{MM}.
\end{proof}

\begin{thm} Suppose the hypotheses in $(\dagger)$ are satisfied. If $X^r$ exists, then $(X_V)^r$ exists and $(X_V)^r \cong (X^r \times_Y V)^{\nu}$.
\end{thm}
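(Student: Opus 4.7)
The plan is to exhibit $Z := (X^r \times_Y V)^{\nu}$, together with the induced rational map $f^r_V : X_V \dashrightarrow Z$, as the $r$th output of the $D_V$-MMP over $V$ with scaling of $H_V$, by verifying the three characterising properties of Lemma 2.2. For property (1), note that $Z$ is normal, and $Z \to V$ is projective: base change of the projective morphism $g^r : X^r \to Y$ along $\pi$ is projective, and the normalisation is finite by the quasi-excellence hypothesis. Moreover, $Z \to X^r$ factors through the base change $X^r \times_Y V \to X^r$, which is quasi-finite because $\pi$ is. This quasi-finiteness, combined with the universal openness of $\pi_V$, ensures that $f^r_V$ is an isomorphism precisely over the locus where $f^r$ is, and that neither $f^r_V$ nor its inverse extracts a divisor; hence $f^r_V$ is a birational contraction to a normal $V$-space.

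For property (2), the divisor $A := f^r_{\star}(D + (r-\epsilon)H)$ is ample over $Y$ by assumption. Its pullback to $X^r \times_Y V$ is ample over $V$ by base change, and ampleness is preserved under further pullback along the finite surjective normalisation $Z \to X^r \times_Y V$. Since $D_V$ and $H_V$ are pulled back from $X$ and $f^r_V$ is itself induced by base change followed by normalisation, this ample divisor coincides with $(f^r_V)_{\star}(D_V + (r-\epsilon)H_V)$.

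Property (3) is where I expect the main obstacle. Let $E \subset X_V$ be a prime divisor contracted by $f^r_V$. Since $\pi_V$ is quasi-finite, $F := \pi_V(E)$ is a prime divisor on $X$, and the commutative square forces $F$ to be contracted by $f^r$, whence $(D+rH)|_F$ is not big over $Y$ by the hypothesis on $f^r$. One then needs to transfer this non-bigness to $E$ over $V$: using that the restricted map $E \to F$ is surjective, quasi-finite and universally open (properties inherited from $\pi_V$), and that $(D_V+rH_V)|_E$ is the pullback of $(D+rH)|_F$, non-bigness descends. This step is precisely where the universal openness assumption on $\pi$ is indispensable.

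Having verified (1)--(3), Lemma 2.2 identifies $Z$ with $(X_V)^r$ whenever the latter exists. To establish existence, I would argue inductively along the individual steps $X^{r_{i-1}} \dashrightarrow X^{r_i}$ of the $D$-MMP over $Y$: at each step, the same construction produces $(X^{r_i} \times_Y V)^{\nu}$ as a valid target for the corresponding step over $V$ (either identity, divisorial contraction, or flip), so the full $r$th output $(X_V)^r$ exists and is isomorphic to $Z$.
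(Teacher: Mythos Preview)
Your approach is correct and essentially identical to the paper's: both exhibit the normalised base change $f^r_V : X_V \dashrightarrow (X^r \times_Y V)^\nu$ and verify the three characterising properties of Lemma~2.1 (which you cite as Lemma~2.2), using quasi-finiteness of $\pi_V$ and $\pi^r_V$ for the divisor-pushing arguments in (1) and (3) and pullback of relative ampleness for (2). Your closing inductive existence paragraph is extra caution the paper omits---it simply treats anything satisfying the three properties as \emph{being} the $r$th output---and the paper invokes only quasi-finiteness, not universal openness, in its verification of property (3).
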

\begin{proof} We assume the existence of $f^r : X \dashrightarrow X^r$. Consider the normalised base change $f^r_V : X_V \dashrightarrow (X^r \times_Y V)^{\nu}$. We claim that this satisfies the three conditions of Lemma 2.1. Indeed, we have a commutative diagram 

\[ \begin{tikzcd}
X_V = (X \times_Y V)^{\nu} \arrow[r, dashed, "f^r_V"] \arrow[d, "\pi_V"] & (X^r \times_Y V)^{\nu} \arrow[d, "\pi^r_V"] \\
X \arrow[r, dashed, "f^r"] & X^r
\end{tikzcd}
\]

By construction, $(X^r \times_Y V)^{\nu}$ is normal. Suppose that the birational inverse $(f^r_V)^{-1}$ contracts some divisor $E$. Since $\pi^r_V$ is quasi-finite, then $\pi^r_V(E)$ is a divisor in $X^r$ whose image in $X$ by $(f^r)^{-1}$ has codimension greater than $1$. This is a contradiction, because $f^r$ is a birational contraction. Therefore, $f^r_V$ is a birational contraction as well. This gives us Condition 1.\\

Similarly, we know that $(f^r)_{\star}(D+(r-\epsilon)H)$ is ample over $Y$. Since $\pi^r_V$ is quasi-finite, then the pullback $(\pi^r_V)^{\star}(f^r)_{\star}(D+(r-\epsilon)H)$ is ample over $Y$. Because we are dealing with divisors on normal schemes, it is clear that $(\pi^r_V)^{\star}(f^r)_{\star}(D+(r-\epsilon)H) = (f^r_V)_{\star}(D_V + (r - \epsilon)H_V)$, which shows Condition 2. \\

Finally, suppose that $f^r_V$ contracts some divisor $E$. Since $\pi_V$ is quasi-finite, then $\pi_V(E)$ is a divisor in $X$ that must be contracted by $f^r$. Hence, $(D+rH)|_{\pi_V(E)}$ is not big, which implies that $(D_V+rH_V)|_E$ is not big, which is Condition 3.
\end{proof}

\begin{cor} Suppose the hypotheses in $(\dagger)$ are satisfied. Suppose $\{Y_j\}$ is an open cover for $Y$. Define $X_j = X \times_Y Y_j$, $g_j = g|_{X_j}$, $D_j = D|_{Y_j}$, and $H_j = H|_{Y_j}$. Suppose that, for each $j$, we know the existence of the $r^{th}$ output of the $D_j$-MMP with scaling of $H_j$ over $g_j$. Then the $r^{th}$ output of the $D$-MMP with scaling of $H$ over $g$ exists.
\end{cor}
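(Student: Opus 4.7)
The plan is to construct the global output $X^r$ by gluing the local outputs $X_j^r$, with Theorem 2.2 providing canonical identifications on overlaps, and then to verify the characterisation of Lemma 2.1 for the glued object.

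For each pair of indices $j, k$, set $V_{jk} = Y_j \cap Y_k$. The open immersion $V_{jk} \hookrightarrow Y_j$ is universally open and quasi-finite, so Theorem 2.2, applied with $Y$ replaced by $Y_j$ and $V = V_{jk}$, shows that $X_j^r \times_{Y_j} V_{jk}$ is the $r^{th}$ output of the MMP for $X \times_Y V_{jk} \to V_{jk}$; because $V_{jk} \hookrightarrow Y_j$ is an open immersion, the fibre product is automatically normal, so no normalisation is needed. The symmetric statement holds for $k$, and by the uniqueness part of Lemma 2.1 the two realisations are canonically identified. Writing $f_j^r: X_j \dashrightarrow X_j^r$ for the local birational contraction, the identification is given explicitly by $\alpha_{jk} = f_k^r \circ (f_j^r)^{-1}$, an isomorphism $X_j^r|_{V_{jk}} \xrightarrow{\sim} X_k^r|_{V_{jk}}$ over $V_{jk}$.

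With this explicit description, the cocycle condition on a triple overlap $V_{jkl}$ is immediate: $\alpha_{kl} \circ \alpha_{jk} = (f_l^r \circ (f_k^r)^{-1}) \circ (f_k^r \circ (f_j^r)^{-1}) = f_l^r \circ (f_j^r)^{-1} = \alpha_{jl}$. Hence the $X_j^r$ glue along the $\alpha_{jk}$ to a normal algebraic space $X^r$ equipped with a structure morphism $g^r: X^r \to Y$, and the local birational contractions $f_j^r$ glue to a birational map $f^r: X \dashrightarrow X^r$.

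To conclude that $X^r$ is the $r^{th}$ output, I would verify the three properties of Lemma 2.1. Normality and the birational-contraction property (1) are local on $Y$ and therefore follow from the corresponding local statements. Relative ampleness of $f^r_\star(D + (r-\epsilon)H)$ over $Y$ can be tested on the open cover $\{Y_j\}$, giving (2). For (3), any prime divisor $E \subset X$ contracted by $f^r$ meets some $X_j$ in a divisor contracted by $f_j^r$, and non-bigness of $(D+rH)|_E$ over $Y$ is a generic condition on $g(E) \subset Y$ that can be read off from any $Y_j$ meeting $g(E)$. The main obstacle is the compatibility of the local outputs on overlaps, which is exactly what Theorem 2.2 delivers; the remainder of the argument is descent-theoretic bookkeeping.
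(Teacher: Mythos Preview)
Your proposal is correct and follows essentially the same approach as the paper: apply Theorem 2.2 to the open immersions $Y_j \cap Y_k \hookrightarrow Y_j$, use the uniqueness in Lemma 2.1 to identify the local outputs on overlaps, glue, and then observe that the three characterising properties of Lemma 2.1 are local on the base. Your write-up is simply more explicit than the paper's (you spell out the cocycle condition via $\alpha_{jk} = f_k^r \circ (f_j^r)^{-1}$ and check the three properties individually), but there is no substantive difference in strategy.
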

\begin{proof}
We use Theorem 2.2 with $Y=Y_j$, $X=X_j$, $g = g_j$, $D=D_j$, $H=H_j$, $V = Y_j \cap Y_k$, and $\pi: V \to Y_j$ the inclusion, where we let $j,k$ vary over all indices in the cover $\{Y_j\}$. \\

Theorem 2.2 implies that, for all indices $j, k$, we get an $r^{th}$ output $(X_{Y_j} \cap X_{Y_k})^r$ and Lemma 2.1 says that this output is unique. This means exactly that $(X_{Y_j})^r$ and $(X_{Y_k})^r$ are isomorphic over $Y_j \cap Y_k$. Therefore, we may glue these outputs together to obtain a scheme $X^r$ together with a birational map $f^r : X \dashrightarrow X^r$ over $Y$. The three conditions of Lemma 2.1 are satisfied locally on the base of $f^r$, and so $f^r$ itself satisfies them.
\end{proof}

\begin{lem} Suppose that $\tilde{W}$ is a quasi-excellent, normal scheme $\psi: \tilde{W} \to Y$ is a Galois covering with Galois group $G$. We still assume that $Y$ is a quasi-excellent, normal algebraic space and that $g:X \to Y$ is a projective morphism from a normal algebraic space. With $\mathbb{R}$-Cartier divisors $D$ and $H$ on $X$ as before, suppose that we know that the $r^{th}$ output $(f_{\tilde{W}})^r : X_{\tilde{W}} \dashrightarrow (X_{\tilde{W}})^r$ over $\tilde{W}$ exists and is $G$-equivariant. Then its quotient by $G$ gives the $r^{th}$ output of the MMP over $Y$.
\end{lem}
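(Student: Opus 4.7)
The strategy is to form the quotient $X^r := (X_{\tilde W})^r / G$ and verify that the descended map $f^r : X \dashrightarrow X^r$ satisfies the three characterising properties of Lemma 2.1. Since $G$ acts on the normal algebraic space $(X_{\tilde W})^r$ by $\tilde W$-automorphisms, the geometric quotient exists as a normal algebraic space over $Y = \tilde W / G$ and the quotient morphism $\pi_{\tilde W}^r : (X_{\tilde W})^r \to X^r$ is finite and surjective. Because $(f_{\tilde W})^r$ is $G$-equivariant, it descends to a well-defined birational map $f^r : X \dashrightarrow X^r$ over $Y$, fitting into an obvious commutative square together with $\pi_{\tilde W} : X_{\tilde W} \to X$ and $\pi_{\tilde W}^r$.

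I would then check the three conditions of Lemma 2.1 by pulling them back to $\tilde W$, where they are known. For Condition 1, normality of $X^r$ is immediate, and if $(f^r)^{-1}$ contracted a divisor $E \subset X^r$, then its preimage under the finite map $\pi_{\tilde W}^r$ would be a divisor on $(X_{\tilde W})^r$ contracted by $((f_{\tilde W})^r)^{-1}$, contradicting the fact that $(f_{\tilde W})^r$ is a birational contraction. For Condition 3, any divisor $E$ contracted by $f^r$ has preimage a $G$-invariant divisor on $X_{\tilde W}$ contracted by $(f_{\tilde W})^r$, so $(D_{\tilde W} + r H_{\tilde W})$ is not big on that preimage over $\tilde W$; finiteness of $\pi_{\tilde W}$ then forces $(D + rH)$ to fail to be big on $E$ over $Y$.

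The substantive step is Condition 2. Here one verifies the compatibility $(\pi_{\tilde W}^r)^\star f^r_\star(D + (r-\epsilon)H) = (f_{\tilde W})^r_\star(D_{\tilde W} + (r-\epsilon)H_{\tilde W})$, using that formation of birational transforms on normal spaces commutes with pullback along finite maps (both sides agree outside a codimension-two locus, and both are $\mathbb{R}$-Cartier on a normal space). The right-hand side is $g_{\tilde W}^r$-ample by hypothesis, and ampleness descends along finite surjective morphisms, so $f^r_\star(D + (r-\epsilon)H)$ is $g^r$-ample over $Y$. Lemma 2.1 then identifies $f^r$ as the $r^{th}$ output of the $g$-relative $D$-MMP with scaling of $H$.

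The main obstacle I expect is not any single conceptual step but rather the bookkeeping around quotients in the algebraic-space category: namely, that $(X_{\tilde W})^r / G$ exists with all the right properties (normal, finite quotient morphism, compatible descent of the birational map from $X_{\tilde W}$), and that the $\mathbb{R}$-Cartier pushforward–pullback identity needed for Condition 2 holds on the nose for divisors that are only $\mathbb{R}$-Cartier rather than Cartier. Once these foundational points are in hand, the verification of the three conditions is essentially formal from the corresponding statements on $\tilde W$.
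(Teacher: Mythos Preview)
Your proposal is correct and follows essentially the same approach as the paper: form the quotient $X^r=(X_{\tilde W})^r/G$, note the quotient map is finite, and verify the three conditions of Lemma~2.1 by pulling back to $\tilde W$ and using finiteness to transfer each property (birational contraction, ampleness, and the exceptional-locus condition) from $(f_{\tilde W})^r$ to $f^r$. Your treatment of Condition~2 via the explicit compatibility $(\pi_{\tilde W}^r)^\star f^r_\star(D+(r-\epsilon)H)=(f_{\tilde W})^r_\star(D_{\tilde W}+(r-\epsilon)H_{\tilde W})$ together with descent of ampleness along finite surjections is exactly what the paper does, only stated a bit more precisely.
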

\begin{proof} We need to show that this quotient satisfies the three properties in Lemma 2.1. We have a commutative diagram:

\[ \begin{tikzcd}
X_{\tilde{W}} \arrow[r, dashed, "(f_{\tilde{W}})^r"] \arrow[d, "/G"] & (X_{\tilde{W}})^r \arrow[d, "/G"] \\
X \arrow[r, dashed, "f^r"] & (X_{\tilde{W}})^r/G
\end{tikzcd}
\]

First, we note that the quotient $(X_{\tilde{W}})^r/G$ exists as a normal algebraic space. Suppose that the birational inverse $(f^r)^{-1}$ contracted some divisor $E$. This pulls back to some divisor on $(X_{\tilde{W}})^r$ whose birational transform in $X_{\tilde{W}}$ must have higher codimension, because taking quotients by $G$ gives finite morphisms. This is a contradiction, so $f^r$ must be a birational contraction. \\

Also, $(f^r)_{\star}(D+(r-\epsilon)H)$ is obtained by taking the quotient of $((f_{\tilde{W}})^r)_{\star}(D_{\tilde{W}}+(r-\epsilon)H_{\tilde{W}})$ by $G$. The latter divisor is, by assumption, ample over $\tilde{W}$, and since taking a quotient by $G$ gives a finite morphism, then our divisor over $(X_{\tilde{W}})^r/G$ is ample over $Y$. \\

Finally, suppose that $f^r$ contracts some divisor $E$ for which $(D+rH)|_E$ is big. Then taking pullbacks by the finite quotient map, we obtain that the divisor $\pi_{\tilde{W}}^{\star}E$ is contracted by $(f_{\tilde{W}})^r$ even though $(D_{\tilde{W}} + rH_{\tilde{W}})|_{\pi_{\tilde{W}}^{\star}E}$ is big. This is a contradiction.\\

We are done by Lemma 2.1.
\end{proof}

\begin{thm} Suppose the hypotheses in $(\dagger)$ are satisfied and that $\pi: V \to Y$ is surjective. If $(X_V)^r$ exists, then $X^r$ exists and $(X_V)^r \cong (X^r \times_Y V)^{\nu}$.
\end{thm}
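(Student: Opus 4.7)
The plan is to combine étale gluing (Corollary 2.3) with Galois descent (Lemma 2.4), using the surjectivity of $\pi$ to produce a Galois cover of $Y$ that factors through $V$.

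First, I would reduce to the case where $Y$ is affine. Choose an étale atlas $\{Y_j \to Y\}$ by affine, quasi-excellent, normal schemes, and set $X_j = X \times_Y Y_j$, $V_j = V \times_Y Y_j$. Each $V_j \to V$ is étale, hence universally open and quasi-finite, so Theorem 2.2 applied to $V_j \to V$ produces $(X_{V_j})^r$ out of $(X_V)^r$, while the composite $V_j \to Y_j$ remains universally open, quasi-finite, and surjective. If I can construct each $(X_j)^r$ over $Y_j$, Corollary 2.3 will glue these outputs into a well-defined $X^r$ over $Y$. So I may assume $Y$ is affine.

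Next, with $Y$ affine, I would construct a Galois cover through which $\pi$ factors. Working on an irreducible component of $V$ dominating $Y$ (which exists because $\pi$ is surjective and universally open), the extension $K(V)/K(Y)$ is finite; take its Galois closure $L/K(Y)$ with group $G$, and let $\tilde{W}$ be the normalisation of $Y$ in $L$. Then $\tilde{W} \to Y$ is a finite Galois cover by $G$ of quasi-excellent, normal schemes, and the universal property of normalisation provides a $Y$-morphism $\tilde{W} \to V$. Applying Theorem 2.2 to the quasi-finite morphism $\tilde{W} \to V$ gives $(X_{\tilde{W}})^r \cong ((X_V)^r \times_V \tilde{W})^\nu$, so the output over $\tilde{W}$ exists.

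To invoke Lemma 2.4, I need $G$-equivariance of $(X_{\tilde{W}})^r$; this falls out of the uniqueness clause of Lemma 2.1, since for any $\sigma \in G$ the composite $\sigma \circ f^r_{\tilde{W}}$ still satisfies the three defining properties of the $r^{\mathrm{th}}$ output, hence agrees with $f^r_{\tilde{W}}$. Lemma 2.4 then produces the desired $X^r := (X_{\tilde{W}})^r/G$ over $Y$, and the claimed isomorphism $(X_V)^r \cong (X^r \times_Y V)^{\nu}$ is the content of Theorem 2.2 applied to this newly constructed $X^r$ together with the given $\pi$. The main obstacle will be verifying that the finite morphism $\tilde{W} \to V$ satisfies the universal openness demanded by $(\dagger)$ before Theorem 2.2 can be invoked: since a finite Galois cover of normal quasi-excellent schemes is not automatically flat, one may need to restrict to a dense Zariski open of $Y$ over which the cover is flat and then re-apply Corollary 2.3, or adapt the proof of Theorem 2.2 to handle generically étale finite covers between normal schemes directly.
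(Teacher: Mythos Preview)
Your overall strategy---pass to a Galois cover $\tilde{W}\to Y$, transfer the MMP output there, and descend via Lemma~2.4---is the paper's strategy. But the transfer step contains a genuine gap.

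You assert that the universal property of normalisation furnishes a $Y$-morphism $\tilde{W}\to V$. It does not. The map $\pi:V\to Y$ is only quasi-finite, so $V$ sits as a (typically proper) open subscheme of the normalisation $\bar V$ of $Y$ in $k(V)$; normalising $Y$ in $L$ gives a \emph{finite} morphism $\tilde{W}\to Y$ together with a factorisation $\tilde{W}\to\bar V\to Y$, but there is no reason for $\tilde{W}\to\bar V$ to land in the open subset $V$. (Take the extreme case where $V\hookrightarrow Y$ is a dense open immersion: then $\tilde{W}=Y$ and there is certainly no morphism $Y\to V$.) Consequently Theorem~2.2 cannot be invoked for ``$\tilde{W}\to V$'', and your concern about universal openness of that map is moot because the map does not exist.

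The paper repairs exactly this point, and this is where surjectivity of $\pi$ is genuinely used. One first completes $V$ to a finite $W\to Y$, forms the Galois closure $\tilde{W}\to Y$ with group $G$, and then works with the \emph{open} subset $\tilde V:=\phi^{-1}(V)\subset\tilde{W}$; the restriction $\tilde V\to V$ is quasi-finite and universally open, so Theorem~2.2 transports $(X_V)^r$ to $(X_{\tilde V})^r$. Surjectivity of $\pi$ now enters: since $G$ acts transitively on the fibres of $\tilde{W}\to Y$, the translates $\{g\tilde V\}_{g\in G}$ form a Zariski open cover of $\tilde{W}$, and Corollary~2.3 glues the translated outputs to produce $(X_{\tilde W})^r$. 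Only then does Lemma~2.4 apply. In your proposal surjectivity is used only to select a dominant component of $V$; that is both insufficient (a single component need not surject onto $Y$) and not the role surjectivity actually plays.

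Two smaller issues. Passing to a single irreducible component of $V$ discards the other components, whose images may be needed to cover $Y$; the paper instead takes the compositum of all the function fields. And your reduction to affine $Y$ invokes Corollary~2.3 for an \'etale atlas, but Corollary~2.3 is stated for Zariski open covers; gluing along an \'etale atlas is essentially the content of the theorem you are proving (compare the role Theorem~2.5 plays in the proof of Theorem~2.6). The paper's argument avoids this reduction entirely.
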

\begin{proof} Since the morphism $\pi : V \to Y$ is quasi-finite, then we can extend it to a finite morphism; that is, there is some scheme $W$ with an open immersion $i: V \to W$ and a surjective, finite morphism $\tilde{\pi}: W \to Y$, such that $\tilde{\pi} \circ i = \pi$. \\

This $\tilde{\pi}$ gives rise to a field extension $k(W)/k(Y)$. Note that $W$ might be disconnected; in this case, by $k(W)$ we mean the compositum of the function fields of the connected components of $W$. We can take a Galois closure $L/k(Y)$ of this field extension. By taking integral closures in $L$, we get a Galois covering $\tilde{W} \to Y$ that dominates $W$. We will denote this by $\phi: \tilde{W} \to W$, such that $\tilde{\pi} \circ \phi: \tilde{W} \to Y$ is Galois. We also note that the intermediate field extension $L/k(W)$ is Galois, and so $\phi$ is itself a Galois covering. \\

Set $\tilde{V} = \phi^{-1}(V)$ and $\tilde{\phi} = \phi|_{\tilde{V}}$. Then $\tilde{\phi}$ is quasi-finite and universally open, so we can apply Theorem 2.2 to conclude the existence of the $r^{th}$ output $(f_{\tilde{V}})^r : X_{\tilde{V}} \dashrightarrow (X_{\tilde{V}})^r \cong ((X_V)^r \times_V \tilde{V})^{\nu}$ of the MMP over $\tilde{V}$. We note that the universal openness of $\tilde{\phi}$ follows from TAG 0F32 in \cite{Stacks}.\\

Put $G = \mathrm{Gal}(L/k(Y))$, so that $G$ acts on $\tilde{W}$ and $\tilde{W}/G \cong Y$. This action is transitive on the fibres of $\tilde{W} \to Y$. Note that since $\pi$ was assumed to be surjective, then the $G$-translates of $\tilde{V}$ cover $\tilde{W}$. In other words, $\{g\tilde{V}\}_{g \in G}$ is an open cover for $\tilde{W}$. Since $g\tilde{V} \cong \tilde{V}$ is an isomorphism for each $g \in G$, and we know that the $r^{th}$ MMP output exists over $\tilde{V}$, then in fact the $r^{th}$ MMP output exists over each open set of our cover. By Corollary 2.3, we deduce the existence of the $r^{th}$ MMP output over $\tilde{W}$, which we will denote by $(f_{\tilde{W}})^r : X_{\tilde{W}} \dashrightarrow (X_{\tilde{W}})^r$. \\

Moreover, we observe that these schemes $X_{\tilde{W}}$ and $(X_{\tilde{W}})^r$ carry $G$-actions, lifted from the $G$-action on $\tilde{W}$, and $(f_{\tilde{W}})^r$ is $G$-equivariant. We now take quotients by $G$, resulting in a birational map $f^r: X \cong X_{\tilde{W}}/G \dashrightarrow (X_{\tilde{W}})^r/G$. By Lemma 2.4, this is the output of the MMP over $Y$ that we wished to construct, so we can write $X^r =  (X_{\tilde{W}})^r/G$.\\

Finally, if we base change this output to $V$ and normalise, we obtain the scheme $(X^r \times_Y V)^{\nu}$. By Theorem 2.2, we know that this satisfies the three conditions of Lemma 2.1, so by uniqueness, we conclude that $(X_V)^r \cong (X^r \times_Y V)^{\nu}$.
\end{proof}

Now we focus on the case where $D = K_X + \Theta$.

\begin{thm} [cf. Theorem 1.5] Suppose $g: (X, \Theta) \to (Y, \Delta)$ is a projective morphism of algebraic spaces of finite type over a field of characteristic $0$, where $(X, \Theta)$ is $\mathbb{Q}$-factorial and dlt. Suppose that $g$ has exceptional divisor $E= E_1+ \cdots + E_n$ and that $K_X+\Theta \sim_{g, \mathbb{R}} E_\Theta = \Sigma e_jE_j$ for some numbers $e_j \geq 0$. Finally, suppose that $H$ is a divisor on $X$, such that $K_X + \Theta +cH$ is $g$-ample for some number $c$.
Then we may run the $g$-relative $(K_X+\Theta)$-MMP with scaling of $H$. 
\end{thm}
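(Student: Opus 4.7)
The strategy is to reduce, via étale base change, to a scheme-theoretic setting where Kollár's relative MMP for morphisms with effective exceptional boundary from \cite{Kol} applies, and then to descend the resulting MMP back to $Y$ using the machinery developed in Section 2. We may assume $Y$ is irreducible by treating components separately.

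Since $Y$ is an algebraic space of finite type over $k$, there exists a surjective étale morphism $\pi: V \to Y$ from a disjoint union $V$ of affine schemes of finite type over $k$; by normalising and restricting to components dominating $Y$, we may take $V$ normal, hence quasi-excellent, and $\pi$ remains universally open and quasi-finite. Pulling back $X$, $\Theta$, $H$, $E$, and $E_\Theta$ along $\pi$, we obtain a projective morphism $g_V: (X_V, \Theta_V) \to V$ to which the hypotheses $(\dagger)$ apply with $D = K_X + \Theta$, since $g_V$-ampleness of $K_{X_V} + \Theta_V + cH_V$ is inherited from the $g$-ampleness of $K_X+\Theta+cH$. Étale base change followed by normalisation preserves $\mathbb{Q}$-factoriality and the dlt property, and the linear equivalence $K_{X_V} + \Theta_V \sim_{g_V,\mathbb{R}} E_{V,\Theta}$ persists with $E_{V,\Theta}$ effective and supported on the $g_V$-exceptional locus.

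Next, I would run the $(K_{X_V} + \Theta_V)$-MMP with scaling of $H_V$ over $V$. Since $V$ is now a scheme of finite type over a field of characteristic $0$, and $g_V: X_V \to V$ satisfies precisely the effective-exceptional boundary hypothesis of Kollár's MMP in \cite{Kol}, both the existence of each contraction/flip and the termination of the program follow from the results of that reference. In particular, the $r^{th}$ output $f_V^r: X_V \dashrightarrow (X_V)^r$ exists for every scaling value $r \geq 0$. Applying Theorem 2.5 at each such $r$, the surjectivity of $\pi$ implies the existence of an $r^{th}$ output $f^r: X \dashrightarrow X^r$ over $Y$ with the compatibility $(X^r \times_Y V)^{\nu} \cong (X_V)^r$. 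Concatenating these outputs as $r$ decreases produces the desired relative $(K_X + \Theta)$-MMP over $Y$ with scaling of $H$.

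The main obstacle is the verification in the second step that Kollár's hypotheses in \cite{Kol} genuinely hold on the scheme $X_V \to V$: one must check that $\mathbb{Q}$-factoriality and the dlt condition—concepts which for algebraic spaces are typically interpreted étale-locally—descend to the pullback $X_V$, and that the effective, $g_V$-exceptional structure survives the normalised base change. These compatibilities are essentially standard properties of étale base change and normalisation, but must be recorded carefully; once in place, the rest is a formal application of the descent framework already established in Section 2.
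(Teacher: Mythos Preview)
Your approach is essentially the paper's: pass to a surjective \'etale cover $V\to Y$ by affine schemes, run the MMP over the scheme $V$, and descend each output via Theorem~2.5. Two inaccuracies are worth flagging.

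First, the reference \cite{Kol} is not the correct one in this generality. Koll\'ar's Theorem~1 there is formulated for scaling divisors $H$ supported on $\mathrm{Ex}(g)$ with coefficients linearly independent over $\mathbb{Q}(e_1,\ldots,e_n)$, neither of which is assumed in the present statement (this is precisely why the paper has to work harder in Theorem~4.1). The paper instead invokes \cite{Fuj}: once $Y_j$ is affine and $K_{X_j}+\Theta_j \sim_{g_j,\mathbb{R}} (E_\Theta)|_{X_j}$ with the right-hand side effective, $g_j$-exceptional, and $\mathbb{R}$-Cartier, Fujino's result gives the existence and termination of the $(K_{X_j}+\Theta_j)$-MMP with scaling of $H_j$ for arbitrary $H$.

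Second, your assertion that \'etale base change preserves $\mathbb{Q}$-factoriality is false; $\mathbb{Q}$-factoriality is a Zariski-local but not \'etale-local condition (a factorial ordinary double point on a threefold becomes non-$\mathbb{Q}$-factorial after \'etale localisation). Fortunately the argument does not need $X_V$ to be $\mathbb{Q}$-factorial: the paper uses $\mathbb{Q}$-factoriality of $X$ only to conclude that $E_\Theta$ is $\mathbb{R}$-Cartier on $X$, and \emph{that} property does pull back along $\pi_V$. The dlt condition, by contrast, is genuinely \'etale-local and survives as you claim. So your ``main obstacle'' paragraph identifies the right checkpoints but resolves one of them incorrectly; once you drop the $\mathbb{Q}$-factoriality claim and swap the citation, the argument coincides with the paper's.
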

\begin{proof} Since $Y$ is an algebraic space of finite type over $k$, we can find finitely many affine $k$-schemes of finite type $Y_j$ along with \'{e}tale morphisms $\pi_j: Y_j \to Y$, such that $Y$ is covered by the images of the $\pi_j$. Let $X_j = X \times_Y Y_j$, which is already normal because $\pi_j$ is \'{e}tale. Let $\Theta_j$ and $H_j$ be the pullbacks of $\Theta$ and $H$ to $X_j$. Note that $K_X$ pulls back to $K_{X_j}$ because $\pi_j$ is \'{e}tale. Since $X$ is $\mathbb{Q}$-factorial, then $E_\Theta$ is $\mathbb{R}$-Cartier. Now $Y_j$ is affine and the pullback of $E_\Theta$ to $X_j$ is effective, exceptional over $Y_j$, and $\mathbb{R}$-Cartier so we can run the relative $(K_{X_j}+\Theta_j)$-MMP over $Y_j$ with scaling of $H_j$ by \cite{Fuj}. \\

For each $j$, the divisor $K_X + \Theta +cH$ pulls back to a divisor on $X_j$ that is ample over $Y_j$, and so we may decrease $c$ to get the MMP started. Let $r$ be the first value of the scaling constant for which $K_X + \Theta +rH$ is no longer ample over $Y_j$ for at least one index $j$. We know that the output $(X_j)^r$ over $Y_j$ exists for each $j$. Let $V$ be the disjoint union of the $Y_j$, so that $X_V$ is the disjoint union of the $X_j$. Then the output $(X_V)^r$ over $V$ exists: indeed, it is the disjoint union of the $(X_j)^r$. We observe that $V \to Y$ is quasi-finite and universally open by TAG 0F32 in \cite{Stacks}. Now by Theorem 2.5, $(X_V)^r$ descends to give the $r^{th}$ output $X^r$ of the relative $(K_X+\Theta)$-MMP over $Y$ with scaling of $H$. \\

We now proceed by decreasing $r$ further to obtain the subsequent MMP outputs. Since the MMP over each of the $Y_j$ eventually terminates, then the MMP over $Y$ terminates, too.
\end{proof}

\section{A Characterisation of Projectivity}

\begin{lem} Suppose that, in addition to the hypotheses of Theorem 2.6, $H = \sum h_iH_i$ is such that the numerical classes of the $H_i$ span $\mathrm{NS}(X)_{\mathbb{Q}}$ and the coefficients $h_i$ are linearly independent over $\mathbb{Q}(e_1, \ldots, e_n)$. Then each step in the MMP of Theorem 2.6 arises from the contraction of an extremal ray in the relative cone of curves.
\end{lem}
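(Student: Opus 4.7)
The plan is to show at each step of the MMP that the contracted face $F \subseteq \overline{\mathrm{NE}}(X/Y)$ is one-dimensional; the statement of the lemma then follows, and I will argue afterwards why the setup propagates across successive steps.

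Fix a step at scaling constant $r > 0$ with contracted face
\[
F = \{\alpha \in \overline{\mathrm{NE}}(X/Y) : (K_X + \Theta + rH) \cdot \alpha = 0\},
\]
and suppose for contradiction that $F$ contains two $\mathbb{R}$-linearly independent classes $[C_1], [C_2]$ of $g$-contracted curves. Because $K_X + \Theta \sim_{g,\mathbb{R}} E_\Theta = \sum_j e_j E_j$, intersecting with a $g$-contracted curve gives $(K_X + \Theta) \cdot C_k = \sum_j e_j (E_j \cdot C_k)$, so the two equations $(K_X + \Theta + rH) \cdot C_k = 0$ become
\[
\sum_j e_j (E_j \cdot C_k) + r \sum_i h_i (H_i \cdot C_k) = 0, \qquad k = 1, 2.
\]
Set $s_k := \sum_j e_j (E_j \cdot C_k) \in \mathbb{Q}(e_1, \ldots, e_n)$ and $t_k := \sum_i h_i (H_i \cdot C_k)$, so $s_k + r t_k = 0$. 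Since the classes $[H_i]$ span $\mathrm{NS}(X)_{\mathbb{Q}}$, some $H_i \cdot C_k$ is nonzero, and $\mathbb{Q}$-linear independence of the $h_i$ (implied by the stronger hypothesis) forces $t_k \neq 0$; together with $r > 0$ this gives $s_k \neq 0$ as well. Eliminating $r$ from $s_1/t_1 = s_2/t_2$ yields $s_1 t_2 - s_2 t_1 = 0$, which expands to
\[
\sum_i h_i \bigl( s_1 (H_i \cdot C_2) - s_2 (H_i \cdot C_1) \bigr) = 0,
\]
a $\mathbb{Q}(e_1, \ldots, e_n)$-linear relation among the $h_i$. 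By hypothesis each coefficient vanishes, so $H_i \cdot C_2 = (s_2/s_1)(H_i \cdot C_1)$ for every $i$. Since the $H_i$ span $\mathrm{NS}(X)_{\mathbb{Q}}$, this forces $[C_2] = (s_2/s_1)[C_1]$ in $N_1(X/Y)_{\mathbb{R}}$, contradicting linear independence. Hence $\dim F = 1$, and $\phi^r$ contracts an extremal ray.

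The main obstacle is purely bookkeeping: after a step $f^r : X \dashrightarrow X^r$ the hypotheses of the lemma must continue to hold on $X^r$ so the argument can be reapplied. The pushforwards $(f^r)_\star H_i$ still span $\mathrm{NS}(X^r)_{\mathbb{Q}}$ because a birational contraction induces a surjection on $\mathrm{NS}$; the coefficients $h_i$ are unchanged; and the new relation $K_{X^r} + \Theta^r \sim_{g^r, \mathbb{R}} \sum_j e_j (f^r)_\star E_j$ involves only the surviving (non-contracted) $E_j$, so the $h_i$ remain linearly independent over the (possibly smaller) field obtained by dropping the $e_j$ attached to contracted divisors. Applying the single-step argument at every stage then gives the conclusion.
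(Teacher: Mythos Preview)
Your proof is correct and follows essentially the same route as the paper's: eliminate $r$ from the two vanishing equations, use the linear independence of the $h_i$ over $\mathbb{Q}(e_1,\ldots,e_n)$ to force the coefficients $(E_\Theta\cdot C')(H_i\cdot C)-(E_\Theta\cdot C)(H_i\cdot C')$ to vanish, and conclude proportionality of the curve classes from the spanning hypothesis on the $H_i$. Your treatment is in fact a bit more careful than the paper's --- you justify explicitly why $s_k\neq 0$ (the paper implicitly divides by $E_\Theta\cdot C$), and you spell out why the hypotheses persist after each step, which the paper handles with the single sentence ``the proof for the other steps is identical.''
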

\begin{proof} We prove this for the first step $X \dashrightarrow X^r$, and the proof for the other steps is identical. Let $\phi^r: X \to Z$ be the associated contraction. We mimic the argument in $\cite{Kol}$: If $C, C'$ are any two curves contracted by $\phi^r$, then we have $\sum h_i (H_i \cdot C) = -r^{-1} (E_{\Theta} \cdot C$), and similarly for $C'$.\\

Eliminating $r$ gives $\sum h_i((E_{\Theta} \cdot C')(H_i \cdot C) - (E_{\Theta} \cdot C)(H_i \cdot C')) = 0$. Since the coefficients of $H$ are linearly independent over $\mathbb{Q}(e_1, \ldots, e_n)$, then we have $(E_{\Theta} \cdot C')(H_i \cdot C) = (E_{\Theta} \cdot C)(H_i \cdot C')$ for every $i$. Therefore, $C$ and $C'$ are proportional as functions on $\langle H_i \rangle_{\mathbb{R}}$. Since the numerical classes of the $H_i$ span $\mathrm{NS}(X)$, then $C$ and $C'$ are numerically equivalent. This then shows that $\overline{\mathrm{NE}}(X/Z)$ is $1$-dimensional.
\end{proof}

\begin{thm} [cf. Theorem 1.1] Suppose that $\psi: (Y, \Delta) \to U$ is a proper morphism of normal algebraic spaces of finite type over a field $k$ of characteristic $0$ and that $(Y, \Delta)$ has klt singularities. If $\psi$ is non-projective, then 
\begin{enumerate}
\item either $Y$ contains a rational curve $C$ such that $\psi(C)$ is a point and $-[C] \in \overline{\mathrm{NE}}(Y/U)$, 
\item or $Y$ has a small, $\mathbb{Q}$-factorial modification that is projective over $U$. 
\end{enumerate}
\end{thm}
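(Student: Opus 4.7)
The plan is to apply Theorem 2.6 to a projective log resolution of $(Y, \Delta)$, run the resulting relative $(K_X+\Theta)$-MMP over $Y$, and analyse the projectivity of the output over $U$.

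Take a projective log resolution $g : X \to Y$ of $(Y, \Delta)$ with $X$ smooth, and set $\Theta = g_\ast^{-1}\Delta + \sum_j E_j$, where $E_1, \ldots, E_n$ are the $g$-exceptional prime divisors. Then $(X,\Theta)$ is log smooth, hence $\mathbb{Q}$-factorial and dlt, and since $(Y,\Delta)$ is klt each log discrepancy $e_j := a_{\log}(E_j;Y,\Delta)$ is strictly positive, so $K_X + \Theta \sim_{g,\mathbb{R}} E_\Theta = \sum_j e_j E_j$ is effective and $g$-exceptional. By Chow's lemma for algebraic spaces we may further arrange that $X$ is projective over $k$, hence $X \to U$ is projective. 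Now pick $H = \sum_i h_i H_i$ on $X$ that is ample, with $\{H_i\}$ spanning $\mathrm{NS}(X)_\mathbb{Q}$ and coefficients $h_i$ linearly independent over $\mathbb{Q}(e_1, \ldots, e_n)$. Then $K_X + \Theta + cH$ is $g$-ample for large $c$, so Theorem 2.6 together with Lemma 3.1 gives a $g$-relative $(K_X+\Theta)$-MMP with scaling of $H$ in which each step $\phi^{r_i} : X^{r_i} \to Z_i$ contracts a single extremal ray $R_i \subset \overline{\mathrm{NE}}(X^{r_i}/Y)$. At termination $K_{X^{\min}} + \Theta^{\min}$ is $g^{\min}$-nef and $\mathbb{R}$-linearly equivalent over $Y$ to an effective $g^{\min}$-exceptional divisor, so the negativity lemma forces that divisor to vanish, every $E_j$ is contracted, and $Y^{\mathrm{qf}} := X^{\min} \to Y$ is a small, $\mathbb{Q}$-factorial modification of $Y$.

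I would then dichotomise on whether each step is projective over $U$, equivalently on whether each $R_i$ is also an extremal ray of $\overline{\mathrm{NE}}(X^{r_i}/U)$. If it is at every step, then starting from $X \to U$ projective and propagating projectivity through each extremal contraction and flip to its output, $Y^{\mathrm{qf}} \to U$ is projective, giving case (2). Otherwise, at the first failing step let $F$ denote the smallest face of $\overline{\mathrm{NE}}(X^{r_i}/U)$ containing $R_i$; then $\dim F \geq 2$, and $F \cap \overline{\mathrm{NE}}(X^{r_i}/Y) = R_i$ using that $\overline{\mathrm{NE}}(X^{r_i}/Y)$ sits as a face inside $\overline{\mathrm{NE}}(X^{r_i}/U)$, so the remaining extremal rays of $F$ correspond to effective curves $C_k'$ in $X^{r_i}$ not contracted by $g^{r_i}$. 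Writing $[C_i] = \sum_k \lambda_k [C_k']$ with $\lambda_k > 0$ (as $R_i$ lies in the relative interior of $F$) and pushing forward by $g^{r_i}$ yields a non-trivial relation $\sum_k \mu_k [g^{r_i}(C_k')] = 0$ with positive coefficients in $N_1(Y/U)_\mathbb{R}$, so $-[g^{r_i}(C_{k_0}')] \in \overline{\mathrm{NE}}(Y/U)$ for any chosen index $k_0$. Since $(K_X + \Theta + r_i H)^{r_i}$ vanishes on $R_i$ and is not identically zero on $F$, it takes both signs on the extremal rays of $F$ other than $R_i$; choosing $k_0$ to lie in a $(K_X + \Theta + r_iH)$-negative (hence $(K_X+\Theta)$-negative, since $H$ is ample) such ray, the cone theorem delivers rationality of $C_{k_0}'$, whence $C := g^{r_i}(C_{k_0}')$ is a rational curve in $Y$ with $\psi(C)$ a point and $-[C] \in \overline{\mathrm{NE}}(Y/U)$, giving case (1).

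The hard part will be rigorously establishing the two structural claims above: that $\overline{\mathrm{NE}}(X^{r_i}/Y)$ really is a face of $\overline{\mathrm{NE}}(X^{r_i}/U)$, and that projectivity over $U$ is correctly propagated through every MMP step, including flips, in the affirmative case. A related subtlety is confirming that at least one extremal ray of $F$ other than $R_i$ is $(K_X + \Theta)$-negative, so that the cone theorem does deliver a rational curve and not merely an abstract numerical class.
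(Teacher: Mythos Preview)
Your overall strategy matches the paper's, but the central dichotomy is set up incorrectly, and this creates a genuine gap.

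You assert that ``each step is projective over $U$'' is \emph{equivalent} to ``each $R_i$ is also an extremal ray of $\overline{\mathrm{NE}}(X^{r_i}/U)$''. This is false. Even when the minimal face $F \subset \overline{\mathrm{NE}}(X^{r_i}/U)$ containing $R_i$ is one-dimensional (so $R_i$ is extremal in the larger cone), $F$ can still contain classes of curves $C'$ with $g^{r_i}(C')$ a curve in $Y$; the paper's Remark after the theorem gives an explicit example. In that situation the contraction of $R_i$ over $Y$ (which only kills curves lying over points of $Y$) is strictly smaller than the contraction of $F$ over $U$, so the step over $Y$ is \emph{not} an MMP step over $U$ and projectivity over $U$ can be lost. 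The paper treats this as a separate subcase and uses the Hacon--McKernan rational-connectedness result applied to $\mathrm{cont}_F$ to produce the required rational curve. Your dichotomy skips this case entirely.

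Relatedly, your structural claim that $\overline{\mathrm{NE}}(X^{r_i}/Y)$ sits as a face inside $\overline{\mathrm{NE}}(X^{r_i}/U)$ is not true in general when $\psi:Y \to U$ is non-projective (which is exactly the hypothesis here): the usual argument cuts out this subcone by the pullback of a $\psi$-ample class, and no such class exists. The paper never uses this; it works directly with the minimal face $F$ and analyses whether $\dim F = 1$ or $\dim F \geq 2$. Your deduction that $F \cap \overline{\mathrm{NE}}(X^{r_i}/Y) = R_i$, and hence that the other extremal rays of $F$ are represented by curves not contracted by $g^{r_i}$, rests on this false premise.

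Finally, your rationality step is loose. From $(K_X+\Theta+r_iH)^{r_i}\cdot R_i = 0$ you cannot conclude that this divisor ``takes both signs'' on the remaining extremal rays of $F$, and ``$H$ ample'' on $X$ does not make $H^{r_i}$ ample on $X^{r_i}$, so you cannot pass from $(K+\Theta+r_iH)$-negativity to $(K+\Theta)$-negativity. The paper instead decreases the scaling constant slightly: $(K_{X^r}+\Theta^r+(r'-\epsilon)H^r)\cdot C < 0$ for $[C]\in R$ forces some extremal ray $R_{j_0}$ of $F$ to be negative for this divisor, and then one needs $(X^r,\Theta^r+(r'-\epsilon)H^r)$ to be \emph{klt} to invoke Kawamata. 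That is why the paper chooses $\Theta$ with coefficients strictly less than $1$ (via $F_1 + \eta E$ with $0 < \eta \ll 1$) and $H$ close to a small multiple of an ample divisor with snc support, rather than your $\Theta = g_\ast^{-1}\Delta + \sum E_j$ with reduced exceptional part.
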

\begin{proof} We first find a log resolution $g: X \to (Y, \Delta)$, such that $X$ is projective over $U$. Write $E = E_1 + \cdots + E_n$ for the exceptional divisor of $g$. We can write $K_X + F_1 = g^{\star}(K_Y + \Delta) +F_2$, where $F_1, F_2$ are effective and $F_2$ is $g$-exceptional. Since $(Y, \Delta)$ is klt, then the coefficients of $F_1$ are all less than $1$. For $0 < \eta \ll 1$, the coefficients of $F_1 + \eta E$ are still less than $1$. Choose such a value of $\eta$, and let $\Theta = F_1 + \eta E$. Then we have $K_X + \Theta \sim_{g, \mathbb{R}} E_{\Theta}$, where $E_{\Theta} = F_2 + \eta E$ is effective and $\mathrm{Supp}(E_{\Theta}) = \mathrm{Ex}(g)$. \\

With this choice of $\Theta$, the pair $(X, \Theta)$ is klt. Now choose a divisor $A$, sufficiently ample over $U$, such that $\mathrm{Supp}(A) \cup \mathrm{Supp}(\Theta)$ is an snc divisor, and such that $K_X + \Theta + cA$ is ample over $Y$, where $0<c<1$ is sufficiently general. We may pick such a divisor by Bertini's Theorem. Note that this choice of $A$ implies that the pair $(X, \Theta + cA)$ is klt. Next, we perturb $cA$ as follows: for every divisor class in some basis for $\mathrm{NS}(X)$, we pick a divisor representing that class and we add a sufficiently small, sufficiently general multiple of it to $cA$. Call the resulting divisor $H$. With these choices, we can arrange that $K_X + \Theta + H$ is ample over $Y$, $(X, \Theta + H)$ is klt, and the coefficients of $H$ are linearly independent over $\mathbb{Q}(e_1, \ldots, e_n$), where $e_1, \ldots, e_n$ are the coefficients of $E_{\Theta}$. \\

We now run the relative $(K_X + \Theta)$-MMP with scaling of $H$ over $Y$, whose steps exist and terminate by Theorem 2.6. Since $(X, \Theta)$ is klt, then this MMP terminates in a klt pair $(X^{\mathrm{min}}, \Theta^{\mathrm{min}})$ projective over $Y$. Since $Y$ is klt, then by Theorem 3.52 in \cite{KM}, $X^{\mathrm{min}}$ is a small modification of $Y$. Note that our choice of $X$ was smooth, and hence $\mathbb{Q}$-factorial. Since each step of this MMP comes from the contraction of an extremal ray, then $X^{\mathrm{min}}$ is still $\mathbb{Q}$-factorial. If $X^{\mathrm{min}}$ is projective over $U$, then $X^{\mathrm{min}}$ is the claimed small, $\mathbb{Q}$-factorial modification of $Y$ that is projective over $U$. \\

Otherwise, along the course of the MMP described above, there is a first step $X^r \dashrightarrow X^{r'}$, such that $X^r$ is projective over $U$ but $X^{r'}$ is not. We will focus on this MMP step; this is not necessarily the first step where the relative MMPs over $Y$ and over $U$ deviate from each other. Note first that since $K_X + \Theta + H$ is ample over $U$, then in particular it is relatively ample, and therefore the index $r_1$ corresponding to the first step in this MMP satisfies $r_1 < 1$. The $\mathbb{Q}$-factoriality of $X$ then implies that $(X, \Theta + r_1H)$ is klt. Additionally, after we do the first MMP step, the output $(X^{r_1}, \Theta^{r_1} + r_1H^{r_1})$ remains klt. Since the next index, say $r_2$, is smaller than $r_1$ and $X^{r_1}$ remains $\mathbb{Q}$-factorial, then in fact $(X^{r_1}, \Theta^{r_1} + r_2H^{r_1})$ is klt. Inductively, we see that right before we do the MMP step that loses projectivity over $U$, we have a klt pair $(X^r, \Theta^r + r'H^r)$ projective over $U$. Since $X^r$ is $\mathbb{Q}$-factorial, we can actually conclude that $(X^r, \Theta^r + (r'-\epsilon)H^r)$ is klt for $0<\epsilon \ll 1$.\\

By Lemma 3.1, our MMP step of interest, $X^r \dashrightarrow X^{r'}$, arises from the contraction of some extremal ray $R$ in $\overline{\mathrm{NE}}(X^r/Y)$. Let $F$ be the minimal extremal face of the larger cone of curves $\overline{\mathrm{NE}}(X^r/U)$ that contains $R$. We have two possibilities to consider: \\

First, suppose that $F$ is itself a ray. If $F$ contains only curves whose images in $Y$ are points, then our step of the relative MMP over $Y$ is actually a step of the relative MMP over $U$. However, the steps of the MMP over $U$ preserve projectivity over $U$. This then gives us a contradiction, because we assumed that $X^{r'}$ is not projective $U$. It may also happen that $F$ contains some curves whose images in $Y$ are curves; in other words, our step of the MMP over $Y$ doesn't contract every curve in $F$. Then there are curves $C, C'$, such that $[C'] = \lambda [C]$ in $\overline{\mathrm{NE}}(X^r/U)$ for some $\lambda > 0$, and such that $g^r(C)$ is a point and $g^r(C')$ is a curve in $Y$. In fact, by Corollary 1.4 in \cite{HM} applied to the contraction morphism $\mathrm{cont}_F: X^r \to Z$ over $U$, we can take the curves $C, C'$ to be rational. This implies that $[g^r(C')] = 0$ in $\overline{\mathrm{NE}}(Y/U)$, so certainly $- [g^r(C')] \in \overline{\mathrm{NE}}(Y/U)$.\\

The second possibility is that $F$ has dimension greater than $1$. Since $R$ is an extremal ray contracted by a step of some MMP, then it is spanned by the class of some rational curve $C$, and $[C] \in F$. Then we can write $[C] = \sum \lambda_j v_j$, where each $v_j \in R_j$ is some vector contained in an extremal ray $R_j$ of $F$, and $\lambda_j \neq 0$. At this point, we do not know that any $v_j$ is the class of some curve in $X^r$. We have that $K_{X^r} + \Theta^r + (r'+\epsilon)H^r$ is relatively ample, and $(K_{X^r} + \Theta^r + r'H^r) \cdot C = 0$. This means that if we decrease $r'$, we get a negative intersection product: $(K_{X^r} + \Theta^r + (r'-\epsilon)H^r) \cdot C < 0$, so there must exist some $j_0$, such that $(K_{X^r} + \Theta^r + (r'-\epsilon)H^r) \cdot v_{j_0} < 0$ for all sufficiently small $\epsilon > 0$. Since we know that $(X, \Theta^r + (r'-\epsilon)H^r)$ is klt and $R_{j_0}$ is a negative extremal ray for this pair, then by \cite{Kaw} this means that $R_{j_0} = \mathbb{R}_{\geq 0}[C_{j_0}]$ for some rational curve $C_{j_0}$ in $X^r$. Replacing $\lambda_{j_0}$ if necessary, we can assume that $v_{j_0} = [C_{j_0}]$. Letting $\epsilon \to 0$, we see that actually $(K_{X^r} + \Theta^r + r'H^r) \cdot C_{j_0} \leq 0$. Suppose for a contradiction that $g^r(C_{j_0})$ is a point. Then $(K_{X^r} + \Theta^r + r'H^r) \cdot C_{j_0} \geq 0$ because the divisor is relatively nef. Combining our two inequalities, we deduce that $(K_{X^r} + \Theta^r + r'H^r) \cdot C_{j_0} = 0$, so that the ray $R_{j_0} \cap \overline{\mathrm{NE}}(X^r/Y)$ gets contracted by this MMP step. This is impossible because this step contracts only the ray $R$. Therefore, $g^r(C_{j_0})$ is a rational curve in $Y$, and $-[g^r(C_{j_0})] = \lambda_{j_0}^{-1} \sum_{j \neq j_0} \lambda_j [g^r(R_j)] \in \overline{\mathrm{NE}}(Y/U)$.
\end{proof}

\begin{rmk} We observe that it really could happen that, in our proof, $F$ is an extremal ray, yet it contains curve classes that are not in $R$. For example, consider $U = \mathrm{Spec}(k)$ and take any threefold with an ordinary double point. Blowing up this double point gives an exceptional divisor isomorphic to $\mathbb{P}^1 \times \mathbb{P}^1$. A step of the absolute MMP would contract this entire $\mathbb{P}^1 \times \mathbb{P}^1$. However, if we ran an MMP over $\mathbb{P}^1$, we would contract only one of the $\mathbb{P}^1$s in the product.
\end{rmk}

\begin{cor} [cf. Corollary 1.2] Suppose that $\psi: Y \to U$ is a proper morphism of normal algebraic spaces of finite type over a field $k$ of characteristic $0$, and that $\Delta$ is some divisor on $Y$, such that the pair $(Y, \Delta)$ has klt singularities. Assume additionally that $Y$ is $\mathbb{Q}$-factorial. Then $\psi$ is non-projective if and only if $Y$ contains a rational curve $C$ such that $\psi(C)$ is a point and $-[C] \in \overline{\mathrm{NE}}(Y/U)$.
\end{cor}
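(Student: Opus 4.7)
The plan is to deduce this corollary from Theorem 3.2 by showing that the extra hypothesis that $Y$ is $\mathbb{Q}$-factorial forces the second alternative in Theorem 3.2 to collapse. I would first dispatch the (easier) reverse direction: assuming the existence of a rational curve $C$ with $\psi(C)$ a point and $-[C] \in \overline{\mathrm{NE}}(Y/U)$, suppose for contradiction that $\psi$ is projective with a $\psi$-ample Cartier divisor $A$. Since $C$ is contained in a fibre of $\psi$, both $[C]$ and $-[C]$ sit inside $\overline{\mathrm{NE}}(Y/U)$, and $A \cdot C > 0$ by the relative Kleiman criterion. Applying Kleiman to $-[C]$ forces $A \cdot C < 0$, a contradiction.

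For the forward direction, assume $\psi$ is non-projective and apply Theorem 3.2. Alternative (1) is exactly the conclusion we are after, so it suffices to rule out alternative (2), which would produce a small $\mathbb{Q}$-factorial modification $\pi : Y^{\mathrm{qf}} \to Y$ with $Y^{\mathrm{qf}}$ projective over $U$. Because $\psi$ is proper and hence separated, the projectivity of $Y^{\mathrm{qf}} \to U$ automatically implies that $\pi$ itself is projective. The corollary therefore reduces to a single standard claim: any small, projective, birational morphism $\pi$ onto a $\mathbb{Q}$-factorial base is an isomorphism. Once this is granted, $Y \cong Y^{\mathrm{qf}}$ is projective over $U$, contradicting the hypothesis.

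To prove the claim I would pick a $\pi$-ample Cartier divisor $H$ on $Y^{\mathrm{qf}}$. Since $Y$ is $\mathbb{Q}$-factorial, the Weil pushforward $\pi_\star H$ is $\mathbb{Q}$-Cartier on $Y$, so $H - \pi^\star \pi_\star H$ is a $\pi$-exceptional $\mathbb{Q}$-Cartier divisor on $Y^{\mathrm{qf}}$. Smallness of $\pi$ means it contracts no divisors, hence this difference vanishes and $H = \pi^\star \pi_\star H$. But then $H$ restricts to zero on every positive-dimensional fibre of $\pi$, contradicting $\pi$-ampleness; thus $\pi$ is finite, and being birational between normal algebraic spaces it must be an isomorphism.

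I do not anticipate a genuine obstacle: the argument is a direct unpacking of Theorem 3.2, Kleiman's criterion, and the small-modification lemma. The only point that warrants verification is that the small-modification lemma transfers verbatim to the algebraic space setting, which it does, since $\mathbb{Q}$-Cartierness, proper pushforward, pullback, and the notion of exceptional divisor are all étale-local on $Y$ and so reduce to the familiar scheme-theoretic statement on an affine étale cover.
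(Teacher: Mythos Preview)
Your proposal is correct and follows essentially the same route as the paper: both directions hinge on the fact that a $\mathbb{Q}$-factorial $Y$ admits no nontrivial small projective modification, which forces alternative (2) of Theorem 3.2 to collapse. The only cosmetic difference is that the paper re-enters the proof of Theorem 3.2 (noting $X^{\mathrm{min}}\cong Y$ so some MMP step must lose projectivity over $U$), whereas you invoke Theorem 3.2 as a black box and rule out case (2) directly; the substance, including the small-modification lemma you spell out, is the same.
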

\begin{proof} If $\psi: Y \to U$ is projective, then $Y$ carries some $\psi$-ample divisor $A$. Then $A$ is positive on every curve on $Y$ that is contracted by $\psi$, so $Y$ cannot contain a curve like the one in our statement. \\

Conversely, if $\psi$ is non-projective, then we can run the argument in the proof of Theorem 3.2. Our relative MMP over $Y$ terminates in a small modification $X^{\mathrm{min}}$ that is projective over $U$. However, we are now assuming that $Y$ is $\mathbb{Q}$-factorial, so $Y$ admits no non-trivial small modifications that are projective over it. This implies that $X^{\mathrm{min}} \cong Y$, so in particular $X^{\mathrm{min}}$ is not projective over $U$. Therefore, there must be some MMP step over $Y$ where projectivity over $U$ is lost. We then argue as in the proof of Theorem 3.2 to find the desired rational curve.
\end{proof}

We then deduce the following characterisation of projectivity. In dimension $3$, without the need for klt singularities, this was done by Koll\'{a}r; see Corollary 5.1.5 in \cite{Kol2}.

\begin{cor} Suppose that $\psi: (Y, \Delta) \to U$ is a proper morphism of normal algebraic spaces of finite type over a field $k$ of characteristic $0$ and that $(Y, \Delta)$ has $\mathbb{Q}$-factorial klt singularities. Then $\psi$ is projective if and only if there is a Cartier divisor $L$ on $Y$ such that $L \cdot C > 0$ for every irreducible curve $C$ in $Y$ such that $\psi(C)$ is a point. ($L$ need not be ample over $U$).
\end{cor}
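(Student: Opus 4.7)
The plan is to deduce this from Corollary 1.2 (\,=\,Corollary 3.4 above). The forward direction is immediate: if $\psi$ is projective, then any $\psi$-ample Cartier divisor $L$ satisfies $L \cdot C > 0$ on every $\psi$-contracted irreducible curve, and we are done.

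For the converse, I would argue by contradiction. Suppose a Cartier divisor $L$ as in the statement exists but $\psi$ is non-projective. Since $(Y,\Delta)$ is $\mathbb{Q}$-factorial and klt, Corollary 1.2 produces an irreducible rational curve $C \subset Y$ with $\psi(C)$ a point and $-[C] \in \overline{\mathrm{NE}}(Y/U)$. This is the entire substance of what I borrow from earlier in the paper.

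To finish, I would invoke a one-line property of the relative cone of curves. By definition, $\overline{\mathrm{NE}}(Y/U)$ is the closure in the finite-dimensional space $N_1(Y/U)_{\mathbb{R}}$ of the convex cone generated by the classes of irreducible curves contracted by $\psi$. The hypothesis on $L$ says that the linear functional $\gamma \mapsto L \cdot \gamma$ is strictly positive on each of these generators, and hence non-negative on the entire closure by continuity. Evaluating at $-[C] \in \overline{\mathrm{NE}}(Y/U)$ then yields $L \cdot C \leq 0$, directly contradicting $L \cdot C > 0$.

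I do not anticipate any real obstacle: the technical content is carried by Corollary 1.2, and what remains is a purely formal observation about the closed cone of curves. The only subtle point worth flagging explicitly is that one only needs $L$ to be positive on \emph{irreducible} $\psi$-contracted curves, rather than on all of $\overline{\mathrm{NE}}(Y/U) \setminus \{0\}$ as in Kleiman's criterion; this weakening is exactly what makes the corollary stronger than the usual relative Kleiman statement, and it is this weakening for which Corollary 1.2 is tailored.
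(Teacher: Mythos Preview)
Your proof is correct and essentially identical to the paper's own argument: both directions match, and for the converse both you and the paper invoke Corollary~1.2 (the $\mathbb{Q}$-factorial case) to produce a rational curve $C$ with $-[C]\in\overline{\mathrm{NE}}(Y/U)$ and then derive the contradiction $L\cdot C\le 0$. One small correction: your parenthetical cross-reference is off---Corollary~1.2 is restated in Section~3 as Corollary~3.3, not Corollary~3.4, which is the very statement you are proving.
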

\begin{proof} If $\psi$ is projective, then $Y$ contains a Cartier divisor $L$ ample over $U$, so we are done. Conversely, suppose that there is a Cartier divisor $L$ on $Y$ that is positive on every irreducible curve contracted by $\psi$. Assume for a contradiction that $\psi$ is not projective. By Corollary 3.3, $Y$ contains a rational curve $C$ such that $-[C] \in \overline{\mathrm{NE}}(Y)$, but then for this curve we have $L \cdot (-C) \geq 0$, which is impossible.
\end{proof}

We recall Kleiman's numerical criterion for ampleness. The usual formulation is that if $Y$ is projective over $U$, then a Cartier divisor $L$ on $Y$ is ample over $U$ if and only if $L$ induces a strictly positive function on $\overline{\mathrm{NE}}(Y/U) \setminus \{0\}$. In fact, Kleiman proved a more general version, where $Y$ is only assumed to be quasi-divisorial over $U$; see Theorem IV.4.2 in \cite{Kle}. There are, however, proper algebraic spaces, even smooth ones, that are not quasi-divisorial: see, for example, Hironaka's second example in the Appendix of \cite{Har}. As a consequence of Corollary 3.4, we prove a version of Kleiman's criterion for which we do not need quasi-divisoriality, but instead we require $Y$ to have mild singularities.  Note that this version of Kleiman's criterion also follows from Lemma 21 in \cite{Kol4}.

\begin{cor} [Kleiman's Criterion, cf. Corollary 1.3] Suppose that $\psi: (Y, \Delta) \to U$ is a proper morphism of normal algebraic spaces of finite type over a field $k$ of characteristic $0$ and that $(Y, \Delta)$ has $\mathbb{Q}$-factorial klt singularities. Let $L$ be a Cartier divisor on $Y$. Then $L$ is $\psi$-ample if and only if $L$ has positive degree on every curve $C$ such that $\psi(C)$ is a point and $L$ induces a strictly positive function on $\overline{\mathrm{NE}}(Y/U) \setminus \{0\}$.
\end{cor}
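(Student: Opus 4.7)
The plan is to deduce this from Corollary 3.4 combined with the classical Kleiman numerical criterion (Theorem IV.4.2 of \cite{Kle}). The forward implication is essentially formal: if $L$ is $\psi$-ample then $\psi$ is projective and hence quasi-divisorial over $U$, so the standard Kleiman criterion applies and gives both that $L \cdot C > 0$ for every curve $C$ contracted by $\psi$ (immediate from ampleness) and that $L$ is strictly positive on $\overline{\mathrm{NE}}(Y/U) \setminus \{0\}$.

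For the converse, which is the substantive direction, I would first invoke Corollary 3.4 on the hypothesis that $L$ has positive degree on every irreducible curve $C$ with $\psi(C)$ a point. This is precisely the sufficient condition appearing in Corollary 3.4, and so it forces $\psi$ to be projective. Once projectivity of $\psi$ is established, the morphism is in particular quasi-divisorial over $U$, and the classical Kleiman criterion can now be applied in its usual form: the second hypothesis, that $L$ induces a strictly positive function on $\overline{\mathrm{NE}}(Y/U) \setminus \{0\}$, then implies that $L$ is $\psi$-ample.

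The only nontrivial input is Corollary 3.4, which is already established. I do not expect any real obstacle; the argument is essentially the observation that our projectivity criterion slots neatly into the classical Kleiman criterion to remove the quasi-divisoriality hypothesis used in its usual formulation, at the cost of imposing $\mathbb{Q}$-factorial klt singularities on $(Y, \Delta)$.
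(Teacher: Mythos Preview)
Your proposal is correct and follows essentially the same approach as the paper: both directions reduce to the classical Kleiman criterion, with Corollary 3.4 supplying projectivity of $\psi$ in the converse direction from the hypothesis that $L$ has positive degree on every contracted curve. The paper's proof is just a terser version of what you wrote.
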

\begin{proof} Suppose first that $L$ is $\psi$-ample. Then $L$ has positive degree on every curve and $\psi$ is projective, so the usual form of Kleiman's Criterion applies. Conversely, if $L$ is positive on every curve, then by Corollary 3.4, $Y$ is projective, so the usual form of Kleiman's Criterion applies, and we deduce that $L$ is ample.
\end{proof}

\begin{rmk} For our version of Kleiman's Criterion, it is not enough to assume that $L$ induces a strictly positive function on $\overline{\mathrm{NE}}(Y/U) \setminus \{0\}$: We really need the assumption that $L$ has positive degree on every curve contracted by $\psi$, or equivalently that $Y$ has no numerically trivial curves that are contracted by $\psi$. See Exercise VI.2.19.3 in \cite{Kol3} for an example where the latter hypothesis is not satisfied and Kleiman's Criterion does not hold.
\end{rmk}

\section{Existence of dlt Modifications}

Our next applications concern the existence of dlt modifications of lc pairs.

\begin{thm} [cf. Theorem 1.6] Suppose $(Y, \Delta)$ is a log canonical pair, where $Y$ is an algebraic space of finite type over a field of characteristic $0$. Then $(Y, \Delta)$ admits a dlt modification that carries a relatively ample, exceptional divisor over $Y$.
\end{thm}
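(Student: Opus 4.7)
The plan is to construct the dlt modification by running the relative MMP of Theorem 2.6 on a carefully chosen log resolution equipped with an ample exceptional divisor. Invoking Theorem 1 of \cite{KW}, I choose a log resolution $g \colon X \to Y$ of $(Y, \Delta)$ together with a divisor $H$ on $X$ that is $g$-ample and has $\mathrm{Supp}(H) = \mathrm{Ex}(g)$. Writing $E_1, \ldots, E_n$ for the prime $g$-exceptional divisors, I set $\Theta := g^{-1}_{\star} \Delta + \sum_i E_i$, so that $(X, \Theta)$ is log smooth and hence $\mathbb{Q}$-factorial and dlt. Log canonicity of $(Y, \Delta)$ yields
\[
K_X + \Theta \sim_{g, \mathbb{R}} E_\Theta := \sum_{i=1}^n (1 + a(E_i; Y, \Delta))\, E_i,
\]
an effective, $g$-exceptional divisor with coefficients $e_i := 1 + a(E_i; Y, \Delta) \geq 0$. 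Since $H$ is $g$-ample, $K_X + \Theta + cH$ is $g$-ample for $c$ sufficiently large, so all hypotheses of Theorem 2.6 hold.

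Running the $g$-relative $(K_X + \Theta)$-MMP with scaling of $H$ then yields a birational contraction $f \colon X \dashrightarrow X^{\mathrm{min}}$ with structure morphism $f^{\mathrm{min}} \colon X^{\mathrm{min}} \to Y$; I set $\Theta^{\mathrm{min}} := f_{\star} \Theta$ and $H^{\mathrm{min}} := f_{\star} H$. Since $\mathbb{Q}$-factoriality and dlt-ness are preserved along the steps of this MMP, $(X^{\mathrm{min}}, \Theta^{\mathrm{min}})$ is again $\mathbb{Q}$-factorial and dlt. At termination, $K_{X^{\mathrm{min}}} + \Theta^{\mathrm{min}} \sim_{f^{\mathrm{min}}, \mathbb{R}} f_{\star} E_\Theta$ is $f^{\mathrm{min}}$-nef, effective, and $f^{\mathrm{min}}$-exceptional, so the negativity lemma forces $f_{\star} E_\Theta = 0$. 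Hence every $E_i$ with $e_i > 0$ has been contracted by $f$, and only the log canonical places of $(Y, \Delta)$ survive in $\Theta^{\mathrm{min}}$, each with coefficient $1$. Pushing the relation $K_{X^{\mathrm{min}}} + \Theta^{\mathrm{min}} \sim_{f^{\mathrm{min}}, \mathbb{R}} 0$ down to $Y$ identifies the implicit $\mathbb{R}$-Cartier class as $K_Y + \Delta$, yielding crepancy $K_{X^{\mathrm{min}}} + \Theta^{\mathrm{min}} \sim_{\mathbb{R}} (f^{\mathrm{min}})^{\star}(K_Y + \Delta)$. This is precisely the data of a dlt modification of $(Y, \Delta)$.

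It remains to show that $H^{\mathrm{min}}$ is both $f^{\mathrm{min}}$-exceptional and $f^{\mathrm{min}}$-ample. Exceptionality is immediate from $g_{\star} H = 0$ together with the factorization $g = f^{\mathrm{min}} \circ f$. For ampleness, if no MMP steps were performed then $H^{\mathrm{min}} = H$ is $g$-ample by hypothesis; otherwise let $r_n > 0$ be the scaling value of the last performed step. The MMP-with-scaling construction then guarantees that $K_{X^{\mathrm{min}}} + \Theta^{\mathrm{min}} + (r_n - \epsilon) H^{\mathrm{min}}$ is $f^{\mathrm{min}}$-ample for all sufficiently small $\epsilon > 0$. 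Using the numerical triviality of $K_{X^{\mathrm{min}}} + \Theta^{\mathrm{min}}$ over $Y$ obtained in the previous paragraph, this divisor is numerically equivalent over $Y$ to $(r_n - \epsilon) H^{\mathrm{min}}$, so $H^{\mathrm{min}}$ itself is $f^{\mathrm{min}}$-ample. The main technical subtlety I foresee is this very final step: extracting ampleness of $H^{\mathrm{min}}$ from the terminal scaling data requires a careful reading of what the algebraic-space MMP of Theorem 2.6 actually produces at its last contraction, and combined with the negativity-lemma step on $X^{\mathrm{min}}$ these are the two places where the arguments of Section 2 do real work beyond the classical log MMP formalism.
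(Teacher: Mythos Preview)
Your overall strategy---choose a log resolution carrying a $g$-ample divisor $H$ with $\mathrm{Supp}(H)=\mathrm{Ex}(g)$ via \cite{KW}, set $\Theta=g^{-1}_\star\Delta+E$, and run the MMP of Theorem~2.6---matches the paper's. There is, however, a genuine gap.

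Your claim that $\mathbb{Q}$-factoriality is preserved along this MMP is false. The MMP of Theorem~2.6 proceeds by contracting extremal \emph{faces}, not rays, and the paper's Remark immediately following this theorem says explicitly that such steps need not preserve $\mathbb{Q}$-factoriality; Corollary~4.2 then does substantial additional work precisely to recover a $\mathbb{Q}$-factorial modification. So you cannot simply assert that $X^{\mathrm{min}}$ is $\mathbb{Q}$-factorial, and with it goes your justification that $H^{\mathrm{min}}$ is even $\mathbb{R}$-Cartier, which your ampleness argument needs.

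The paper's substitute for $\mathbb{Q}$-factoriality is Koll\'ar's framework \cite{Kol}: after perturbing the coefficients of $H$ to be linearly independent over $\mathbb{Q}(e_1,\ldots,e_n)$, one knows that each individual $E_i$ stays $\mathbb{Q}$-Cartier through every step, hence so does $H$. But this is exactly where the algebraic-space setting bites: under the \'etale cover $V\to Y$, the components of $H$ can split, destroying the linear-independence hypothesis of \cite{Kol}. The core of the paper's proof is the verification that if one irreducible component of $\pi_V^\star E_0$ is contracted at a given step, then \emph{all} of $\pi_V^\star E_0$ is contracted at that same step, so one remains effectively in the setting of \cite{Kol}. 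You omit both the perturbation of $H$ and this \'etale-descent argument. Your negativity-lemma route to $K_{X^{\mathrm{min}}}+\Theta^{\mathrm{min}}\equiv_Y 0$ and thence to ampleness of $H^{\mathrm{min}}$ is a pleasant alternative to the paper's endgame, but it still presupposes that the relevant divisors on $X^{\mathrm{min}}$ are $\mathbb{R}$-Cartier, and that is precisely what the missing \cite{Kol}-style argument is there to supply.
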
 
\begin{proof} Let $g: (X, \Theta) \to (Y, \Delta)$ be a log resolution such that the morphism $g$ is projective and such that $\mathrm{Ex}(g)$ supports a relatively ample divisor. This can be arranged by Theorem 1 in \cite{KW}. Here, $\Theta$ is defined by $K_X + \Theta = K_X + E + \widetilde{\Delta} = g^\star(K_Y+\Delta) + E_\Theta$, where $E = E_1 + \cdots + E_n$  is the exceptional divisor of $g$, $\widetilde{\Delta}$ is the strict transform of $\Delta$, and $E_\Theta = \Sigma e_iE_i$ is some combination of the exceptional divisors. In particular, $e_i  \geq 0$ for each $i$ because $(Y, \Delta)$ is log canonical. Additionally, since $(X, \Theta)$ is log smooth, then it is dlt and each $E_i$ is $\mathbb{Q}$-Cartier. \\

Since $H$ is relatively ample, then there exists a number $c$, such that $K_X + \Theta + cH$ is $g$-ample. We can then wiggle the coefficients of $H$ so that they become linearly independent over $\mathbb{Q}(e_1, \ldots, e_n)$ and relative ampleness is preserved. \\

Note that if $Y$ were assumed to be a scheme, then we would be in the setting of Theorem 1 in \cite{Kol}, and our result would follow from Theorem 2.6. When $Y$ is an algebraic space, we cannot directly apply Koll\'{a}r's result. Additionally, the morphisms $\pi: V \to Y$ and $\pi_V:  X_V \to X$ are no longer open immersions and are instead only known to be \'{e}tale (here, $V$ is as in the proof of Theorem 2.6). Thus, the pullback of an irreducible divisor can have multiple components, and hence the coefficients of $H_V = \pi_V^\star H$ are no longer linearly independent over $\mathbb{Q}(e_1, \ldots, e_n)$, so we also cannot directly apply Theorem 1 in \cite{Kol} to the relative $(K_{X_V} + \Theta_V)$-MMP with scaling of $H_V$. \\

Suppose that the first step of the MMP over $V$ contracts some irreducible divisor $F_0$ contained in some $\pi_V^\star E_0$. We will show that actually all of $\pi_V^\star E_0$ is contracted by this same step. \\

Indeed, suppose $C$ is some curve in $\pi_V^\star E_0$, so that $\pi_V(C)$ is contained in $E_0$. Let $C'$ be the component of $\pi_V^\star(\pi_V(C))$ that is contained in $F_0$. Since $F_0$ is contracted, then $(K_{X_V} + \Theta_V + rH_V) \cdot C' = 0$. By the Projection Formula, we have that $(K_X + \Theta + rH) \cdot \pi_V(C) = 0$. But the morphism $\pi_V$ restricts to a finite, \'{e}tale morphism over $\pi_V^\star(\pi_V(C))$, say of degree $d$. Then we have $(K_{X_V} + \Theta_V + rH_V) \cdot \pi_V^\star(\pi_V(C)) = d (K_X + \Theta + rH) \cdot \pi_V(C) = 0$. Since $C$ is a component of $\pi_V^\star(\pi_V(C))$ and $K_{X_V} + \Theta_V + rH_V$ is relatively nef, then $(K_{X_V} + \Theta_V + rH_V) \cdot C = 0$, so $C$ is contracted. \\

In summary, we have shown that if, in the first MMP step over $V$, we contract an irreducible component of $H$ appearing with coefficient $h_i$, then every component with that same coefficient is contracted as well. The same argument shows that is true for all subsequent steps. This is all we need to be in the setting of Theorem 1 in \cite{Kol}. \\

By Theorem 2.6, we may run the relative $(K_X + \Theta)$-MMP with scaling of $H$ over $Y$. This MMP will now terminate in a relative minimal model $g^{\mathrm{min}}: (X^{\mathrm{min}}, \Theta^{\mathrm{min}}) \to (Y, \Delta)$, and this pair $(X^{\mathrm{min}}, \Theta^{\mathrm{min}})$ is the dlt modification we were seeking. Since the birational transform of each one of the exceptional divisors $E_i$ to $X^{\mathrm{min}}$ is $\mathbb{Q}$-Cartier, then the divisor $K_{X^{\mathrm{min}}} + \Theta^{\mathrm{min}} + cH^{\mathrm{min}}$ on $X^{\mathrm{min}}$ is ample over $Y$ for $c$ sufficiently small.
\end{proof}

\begin{rmk} The dlt modification that we obtain in Theorem 4.1 has the property that it carries a relatively ample exceptional divisor. We could, instead, ask for dlt modifications for which the underlying space is $\mathbb{Q}$-factorial. We observe again that the steps of Koll\'{a}r's MMP need not preserve $\mathbb{Q}$-factoriality because we are allowed to contract extremal faces of dimension $\geq 2$. This means that we need to do some more work to obtain $\mathbb{Q}$-factorial dlt modifications.
\end{rmk}

\begin{cor} Let $(Y, \Delta)$ be an lc pair, where $Y$ is an algebraic space of finite type over a field $k$ of characteristic $0$. Then $(Y, \Delta)$ admits a $\mathbb{Q}$-factorial dlt modification that is projective over $Y$.
\end{cor}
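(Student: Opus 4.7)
The plan is to take the dlt modification $g^{\min}:(X^{\min},\Theta^{\min})\to(Y,\Delta)$ furnished by Theorem 4.1 and extract a small $\mathbb{Q}$-factorialisation of $X^{\min}$ by running a second MMP via Theorem 2.6, with Lemma 3.1 used to keep every step extremal and thereby preserve $\mathbb{Q}$-factoriality.

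Concretely, by the definition of dlt I would choose a log resolution $h:X'\to X^{\min}$ whose exceptional divisors $E_{1},\dots,E_{n}$ all satisfy $a_{i}:=a(E_{i};X^{\min},\Theta^{\min})>-1$. Set $\Theta':=\widetilde{\Theta}^{\min}+\sum_{i}E_{i}$; then $(X',\Theta')$ is log smooth, hence $\mathbb{Q}$-factorial and dlt, and
\[
K_{X'}+\Theta'\sim_{h,\mathbb{R}}\sum_{i=1}^{n}(a_{i}+1)E_{i}=:E_{\Theta},
\]
with each coefficient $e_{i}:=a_{i}+1$ strictly positive. Choose an $h$-ample divisor $H$ on $X'$ and perturb it as in Lemma 3.1 so that its components span $\mathrm{NS}(X')_{\mathbb{Q}}$ and its coefficients are linearly independent over $\mathbb{Q}(e_{1},\dots,e_{n})$, while preserving $h$-ampleness of $K_{X'}+\Theta'+cH$ for $c\gg 0$. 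This places us in the setting of Theorem 2.6.

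Now run the $h$-relative $(K_{X'}+\Theta')$-MMP with scaling of $H$. By Lemma 3.1, every step is the contraction of a single extremal ray, so $\mathbb{Q}$-factoriality and dlt are preserved, and the MMP terminates in a $\mathbb{Q}$-factorial dlt pair $(X'',\Theta'')$ projective over $X^{\min}$ with $K_{X''}+\Theta''$ relatively nef. To see that $h'':X''\to X^{\min}$ is small, note that the birational transform $E_{\Theta}''$ of $E_{\Theta}$ is effective, $h''$-exceptional, and $h''$-nef (being $h''$-$\mathbb{R}$-linearly equivalent to $K_{X''}+\Theta''$). Applying the negativity lemma to the $h''$-exceptional divisor $-E_{\Theta}''$ gives $E_{\Theta}''\le 0$, and combined with effectivity this forces $E_{\Theta}''=0$. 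Since every $e_{i}>0$, every $E_{i}$ is contracted, so $h''$ is small. The composite $X''\to X^{\min}\to Y$ is then a projective birational morphism from a $\mathbb{Q}$-factorial dlt pair, and the equality $K_{X''}+\Theta''\sim_{h'',\mathbb{R}}0$ together with the dlt-modification property of $g^{\min}$ shows it is the desired modification of $(Y,\Delta)$.

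The main obstacle is precisely the one flagged in Remark 4.2: Koll\'ar's MMP as deployed in Theorem 2.6 is free to contract higher-dimensional extremal faces, which would in general destroy $\mathbb{Q}$-factoriality, so everything hinges on arranging the scaling divisor $H$ generically enough to invoke Lemma 3.1. Once the MMP is forced to proceed one extremal ray at a time, termination, preservation of dlt, and the smallness of the resulting modification are routine.
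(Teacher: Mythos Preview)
Your argument is correct, but it is a genuinely different route from the paper's. You produce the small $\mathbb{Q}$-factorialisation of $X^{\min}$ by running a \emph{second} MMP over $X^{\min}$ (the standard BCHM-style construction), using Theorem~2.6 to make this legitimate over an algebraic-space base and Lemma~3.1 to force every step to be an extremal-ray contraction so that $\mathbb{Q}$-factoriality persists; the negativity-lemma step then shows the result is small. The paper instead never runs a second MMP: it tracks the quotient $\mathrm{Cl}(X^{j})/\mathrm{Pic}(X^{j})$ through the steps of the original MMP of Theorem~4.1, shows inductively that this group stays finitely generated (each step changes the rank by at most $\dim V$, where $V$ is the contracted face), and then performs finitely many symbolic blow-ups along generators of $\mathrm{Cl}(X^{\min})/\mathrm{Pic}(X^{\min})$ to obtain a small $\mathbb{Q}$-factorial modification directly. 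Your approach is arguably more in keeping with the machinery already built in Sections~2--3 and sidesteps any question about finite generation of the symbolic Rees algebras implicit in the paper's blow-up step; the paper's approach, on the other hand, avoids the genericity bookkeeping for $H$ and a second invocation of termination.
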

\begin{proof} We first take a log resolution $g: (X, \Theta) \to (Y, \Delta)$ and then run the relative MMP with scaling described in Theorem 4.1 to obtain a dlt modification $g^{\mathrm{min}}: (X^{\mathrm{min}}, \Theta^{\mathrm{min}}) \to (Y, \Delta)$. If $\mathrm{dim}(Y) \leq 2$, then $X^{\mathrm{min}}$ is already $\mathbb{Q}$-factorial, so we only need to look at the case where $\mathrm{dim}(Y) \geq 3$.\\

We claim that the abelian group $\mathrm{Cl}(X^{\mathrm{min}})/\mathrm{Pic}(X^{\mathrm{min}})$ is finitely generated. We show this by induction on the MMP steps. Observe that since $X$ is smooth, then $\mathrm{Cl}(X)/\mathrm{Pic}(X)$ is trivial and so certainly finitely generated. Suppose next that after $j$ MMP steps $\mathrm{Cl}(X^j)/\mathrm{Pic}(X^j)$ is finitely generated, and consider the next step:

\[ \begin{tikzcd}
X^j \arrow[rr, dashed, "f"] \arrow[dr, "\phi"] & & X^{j+1} \arrow[ld, "\psi", swap] \\
& Z
\end{tikzcd}
\]

Here, $\phi$ is the contraction morphism of an extremal face $V$ of $\overline{\mathrm{NE}}(X^j/Y)$, and $\psi$ is a small modification. By the Contraction Theorem, we have a short exact sequence $0 \to \mathrm{Pic}(Z) \to \mathrm{Pic}(X^j) \to \mathbb{Z}^{\mathrm{dim}(V)}$. We also know that $\psi$ induces an inclusion $\mathrm{Pic}(Z) \to \mathrm{Pic}(X^{j+1})$. This means that going from $\mathrm{Pic}(X^j)$ to $\mathrm{Pic}(Z)$ decreases the Picard rank by $\mathrm{dim}(V)$, and then going to $\mathrm{Pic}(X^{j+1})$ may increase the rank. Similarly, going from $\mathrm{Cl}(X^j)$ to $\mathrm{Cl}(X^{j+1})$ decreases the rank by $1$ if $\phi$ contracts a divisor, and there is no change in the class group if $\phi$ does not contract any divisors. So at worst, going from $\mathrm{Cl}(X^j)/\mathrm{Pic}(X^j)$ to $\mathrm{Cl}(X^{j+1})/\mathrm{Pic}(X^{j+1})$ increases the rank by $\mathrm{dim}(V)$, and hence our quotient remains finitely generated. \\

By induction it follows that $\mathrm{Cl}(X^{\mathrm{min}})/\mathrm{Pic}(X^{\mathrm{min}}) \cong \mathbb{Z}^m \oplus A$, where $m$ is a non-negative integer and $A$ is a finite abelian group. Each $\mathbb{Z}$ summand is generated by some divisor class $[D_i], 1 \leq i \leq m$, and these divisors $D_i$ are not $\mathbb{Q}$-Cartier. Blowing up along one of these divisors $D_i$ gives a small morphism because $\mathrm{dim}(X^{\mathrm{min}}) \geq 3$, and it decreases the rank of our abelian group by $1$ because the strict transform of $D_i$ is $\mathbb{Q}$-Cartier. To be more precise, we wish to take the symbolic blow-up by $-D_i$ (see e.g. Remark 6.33 in \cite{KM}).\\

After $m$ blow ups, we have a small modification $h: X^{\mathrm{qf}} \to X^{\mathrm{min}}$, where $\mathrm{Cl}(X^{\mathrm{qf}})/\mathrm{Pic}(X^{\mathrm{qf}})$ is finite, and therefore $X^{\mathrm{qf}}$ is $\mathbb{Q}$-factorial. Now $g^{\mathrm{min}} \circ h : (X^{\mathrm{qf}}, h_\star^{-1} \Theta^{\mathrm{min}}) \to (Y, \Delta)$ is the desired $\mathbb{Q}$-factorial dlt modification.
\end{proof}

Finally, we look at an application of Theorem 2.2 where the morphism $\pi: V \to Y$ is quasi-\'{e}tale.

\begin{cor} Suppose $\pi: V \to Y$ is a universally open, quasi-finite and quasi-\'{e}tale morphism of normal quasi-projective varieties over a field of characteristic $0$. Let $(Y, \Delta)$ be lc, and take a projective log resolution $g: (X, \Theta) \to (Y, \Delta)$, such that there exists a relatively ample divisor $H$ with $\mathrm{Supp}(H) = \mathrm{Ex}(g)$. Let $E = E_1 + \cdots E_n$ be the exceptional divisor of $g$. Then running the relative $(K_X+\Theta)$-MMP with scaling of $H$ and base changing it to $V$ is equivalent to running the relative $(K_{X_V}+\Theta_V)$-MMP with scaling of $H_V$.
\end{cor}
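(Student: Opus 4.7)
The plan is to reduce the statement to a direct application of Theorem 2.2, after verifying that both MMPs can be run in their respective settings.

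First, I would verify the hypotheses of Theorem 2.6 on the $Y$-side so that the $(K_X+\Theta)$-MMP over $Y$ with scaling of $H$ actually runs. The pair $(X, \Theta)$ is $\mathbb{Q}$-factorial and dlt because $g$ is a log resolution; the relation $K_X + \Theta \sim_{g, \mathbb{R}} E_\Theta$ holds with $e_j \geq 0$ because $(Y, \Delta)$ is lc; and $K_X + \Theta + cH$ is $g$-ample for sufficiently small $c > 0$ because $H$ is $g$-ample with $\mathrm{Supp}(H) = \mathrm{Ex}(g)$. Theorem 2.6 then produces an output $X^r$ for every admissible scaling constant $r$.

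Second, since $\pi : V \to Y$ is universally open and quasi-finite by hypothesis, the standing assumption $(\dagger)$ of Section 2 is satisfied by the data $(V, X, g, D = K_X+\Theta, H)$. I would then invoke Theorem 2.2 at every scaling constant $r$: the base change $(X^r \times_Y V)^\nu$ exists and is characterised, via Lemma 2.1, as the $r$-th output $(X_V)^r$ of the relative $(K_{X_V}+\Theta_V)$-MMP over $V$ with scaling of $H_V$. This is the precise sense in which the two procedures agree.

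Third, to interpret ``running the MMP over $V$'' independently of Theorem 2.2, I would need the hypotheses of Koll\'{a}r's MMP in \cite{Kol} to transfer to the $V$-side. The subtlety, already encountered in the proof of Theorem 4.1, is that pulling back can split irreducible divisors, so the coefficients of $H_V = \pi_V^\star H$ need not be linearly independent over $\mathbb{Q}(e_1, \ldots, e_n)$ even when those of $H$ are. Quasi-\'{e}taleness of $\pi$ is what saves the day: since $\pi$ is \'{e}tale in codimension $1$, the projection formula for divisors is available, and the argument from the proof of Theorem 4.1 transfers essentially verbatim. Namely, if an irreducible component of some $\pi_V^\star E_0$ is contracted at a step over $V$, then the relative nefness of $K_{X_V}+\Theta_V + rH_V$ combined with the projection formula forces every component of $\pi_V^\star E_0$ to be contracted at that same step. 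With this in hand, Koll\'{a}r's MMP runs over $V$ and its outputs agree, via the uniqueness in Lemma 2.1, with the base changes of the outputs of the MMP over $Y$.

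The hard part, as suggested above, is precisely this third step: transporting the ``components-contracted-together'' argument from the strictly \'{e}tale setting of Theorem 4.1 to the merely quasi-\'{e}tale setting here. Everything else is a repeated application of the machinery already built up in Section 2.
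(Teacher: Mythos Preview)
Your second step silently assumes the identity $\pi_V^\star(K_X+\Theta)=K_{X_V}+\Theta_V$, and this is precisely the point of the corollary. Theorem~2.2 is a statement about an abstract divisor $D$ and its pullback $D_V=\pi_V^\star D$; applying it with $D=K_X+\Theta$ tells you only that base-changing the $(K_X+\Theta)$-MMP over $Y$ yields the $\pi_V^\star(K_X+\Theta)$-MMP over $V$. To conclude that the latter is the $(K_{X_V}+\Theta_V)$-MMP you must show $\pi_V^\star(K_X+\Theta)=K_{X_V}+\Theta_V$, where $\Theta_V$ is the boundary coming from the log resolution $g_V:(X_V,\Theta_V)\to(V,\pi^\star\Delta)$, \emph{not} the pullback $\pi_V^\star\Theta$. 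The paper establishes this via Riemann--Hurwitz: since $\pi$ is quasi-\'etale, all ramification of $\pi_V$ lies over the exceptional divisor $E$, and because $E$ appears in $\Theta$ with coefficient $1$, the ramification divisor cancels. The remark following the proof explicitly warns that $\pi_V^\star K_X\neq K_{X_V}$ and $\pi_V^\star\Theta\neq\Theta_V$ individually, so the identity for the sum is not automatic.

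Your third step is both unnecessary and not obviously valid. It is unnecessary because Theorem~2.2 already supplies existence of $(X_V)^r$; you do not need to run Koll\'ar's MMP over $V$ independently. It is not obviously valid because the argument in Theorem~4.1 relied on $\pi_V$ being \'etale, so that its restriction to any curve was finite \'etale of a fixed degree; here $\pi_V$ is merely quasi-\'etale and ramifies exactly along $E$, yet the curves you would feed into that argument live in $E$. So quasi-\'etaleness does ``save the day,'' but through Riemann--Hurwitz rather than through the components-contracted-together mechanism you invoke.
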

\begin{proof} As before, we take $\Theta$ to be such that $K_X + \Theta \sim_{g, \mathbb{R}} E_\Theta$, where $E_\Theta = \sum e_iE_i$ is $g$-exceptional and effective, so that \cite{Fuj} applies. $H$ is relatively ample, so there exists a number $c$ for which $K_X + \Theta +cH$ is $g$-ample. We can wiggle the coefficiens of $H$ to guarantee that they are linearly independent over $\mathbb{Q}(e_1, \ldots e_n)$. Since $\pi$ is quasi-\'{e}tale, then all the ramification of $\pi_V : X_V \to X$ is along the exceptional divisor $E$. We have a log resolution $g_V: (X_V, \Theta_V) \to (V, \pi^{\star}\Delta)$, where $\Theta_V$ is defined similarly to $\Theta$. By the Riemann-Hurwitz formula, it follows that $\pi_V^\star(K_X+\Theta) = K_{X_V} + \Theta_V$.\\

Now we decrease $c$ until we reach the first value $r$ for which $K_X + \Theta + rH$ is no longer $g$-ample. Then we have $(X_V)^r \cong X^r \times_Y V$ by Theorem 2.2. We can keep on decreasing $r$ until the MMP over $Y$ terminates.
\end{proof}

\begin{rmk} We observe that $\pi_V^\star K_X \neq K_{X_V}$ and $\pi_V^\star \Theta \neq \Theta_V$, and that we need to be in characteristic $0$ to avoid wild ramification. 
\end{rmk}

\begin{rmk} A particular example of Corollary 4.3 is when $\pi: V \to Y$ is a quotient singularity by some group action. In this case, we may run the relative MMP on $g: (X, \Theta) \to (Y, \Delta)$ to obtain a dlt modification of $(Y, \Delta)$. However, the corresponding relative MMP on $g_V: X_V \to V$ will not yield a dlt modification of $(V, \pi^{\star}\Delta)$; this will only be a quotient-dlt modification. See Section 5 of \cite{dFKX} for more on quotient-dlt pairs.
\end{rmk}


\bibliographystyle{plain}
\bibliography{template}

\end{document}